\newtheorem{theorem}{Theorem}[section]
\newtheorem{pro}[theorem]{Proposition}
\newtheorem{lem}[theorem]{Lemma}
\newtheorem{coro}[theorem]{Corollary}
\theoremstyle{definition}
\newtheorem{defi}[theorem]{Definition}
\newtheorem{exam}[theorem]{Example}
\newtheorem{problem}[theorem]{Problem}
\newtheorem{rem}[theorem]{Remark}
\def\Cos{\hbox{\rm Cos}}
\def\Z{\mathbb{Z}}
\def\Aut{\hbox{\rm Aut}}
\def\Ga{\Gamma}
\long\def\delete#1{}
\title{Perfect codes in vertex-transitive graphs}
\author[a]{Yuting Wang}
\author[b]{Junyang Zhang\footnote{Corresponding author}}
\affil[a]{{\small School of Mathematical Sciences, Capital Normal University, Beijing 100048, P. R. China}}
\affil[b]{{\small School of Mathematical Sciences, Chongqing Normal University, Chongqing 401331, P. R. China}}
\date{}
\begin{document}

\openup 0.5\jot
\maketitle
\footnotetext{E-mail: yiniyiting@163.com (Yuting Wang), jyzhang@cqnu.edu.cn (Junyang Zhang)}

\vspace{-10mm}
\begin{abstract}
Given a graph $\Gamma$, a perfect code in $\Gamma$ is an independent set $C$ of vertices of $\Gamma$ such that every vertex outside of $C$ is adjacent to a unique vertex in $C$, and a total perfect code in $\Gamma$ is a set $C$ of vertices of  $\Gamma$ such that every vertex of $\Gamma$ is adjacent to a unique vertex in $C$. To study (total) perfect codes in vertex-transitive graphs, we generalize the concept of subgroup (total) perfect code of a finite group introduced in \cite{HXZ18} as follows: Given a finite group $G$ and a subgroup $H$ of $G$, a subgroup $A$ of $G$ containing $H$ is called a subgroup (total) perfect code of the pair $(G,H)$ if there exists a coset graph $\Cos(G,H,U)$ such that the set consisting of left cosets of $H$ in $A$ is a (total) perfect code in $\Cos(G,H,U)$. We give a necessary and sufficient condition for a subgroup $A$ of $G$ containing $H$ to be a (total) perfect code of the pair $(G,H)$ and generalize a few known results of subgroup (total) perfect codes of groups. We also construct some examples of subgroup perfect codes of the pair $(G,H)$ and propose a few problems for further research.

\medskip
{\em Keywords:} vertex-transitive graph; perfect code; total perfect code; coset graph

\medskip
{\em AMS subject classifications (2010):} 05C25, 05C69, 94B25
\end{abstract}

\section{Introduction}
In this paper, all groups considered are finite, and all graphs considered are finite, undirected and simple.
Let $\Gamma$ be a graph with vertex set $V(\Gamma)$ and edge set $E(\Gamma)$, and let $t$ be a positive integer. A subset $C$ of $V(\Gamma)$ is called \cite{Big, Kr86} a  \emph{perfect $t$-code} in $\Gamma$ if every vertex of $\Gamma$ is at distance no more than $t$ to exactly one vertex in $C$, where the \emph{distance} in $\Ga$ between two vertices is the length of a shortest path between the two vertices or $\infty$ if there is no path in $\Gamma$ joining them. A perfect $1$-code is usually called a {\em perfect code}. Equivalently, a subset $C$ of $V(\Gamma)$ is a perfect code in $\Gamma$ if $C$ is an independent set of $\Gamma$ and every vertex in $V(\Gamma) \setminus C$ has exactly one neighbor in $C$. A subset $C$ of $V(\Gamma)$ is said to be a \emph{total perfect code} \cite{Zhou2016} in $\Gamma$ if every vertex of $\Gamma$ has exactly one neighbor in $C$. It is obvious that a total perfect code in $\Gamma$ induces a matching in $\Gamma$ and therefore has even cardinality.
In graph theory, a perfect code in a graph is also called an \emph{efficient dominating set} \cite{DeS} or \emph{independent perfect dominating set }\cite{Le}, and a total perfect code is called an \emph{efficient open dominating set} \cite{HHS}.

The concept of $t$-perfect codes in graphs were firstly introduced by Biggs \cite{Big} as a generalization of the classical concept \emph{perfect $t$-error-correcting code} in coding theory \cite{Heden1,HK18,Va75,MS77}. For a set $A$ (usually with an algebraic structure such as group, ring, or field), we use $A^{n}$ to denote the $n$-fold Cartesian product of $A$. In coding theory, $A$ is called an \emph{alphabet} and elements in $A^{n}$ are called \emph{words} of length $n$ over $A$. A \emph{code} $C$ over an alphabet $A$ is simply a subset of $A^{n}$, and every word in $C$ is called a \emph{codeword}. The Hamming distance of two words in $A^{n}$ is the number of positions in which they differ. A code $C$ over $A$ is called a \emph{perfect $t$-error-correcting Hamming code} if every word in $A^{n}$ is at Hamming distance no more than $t$ to exactly one codeword of $C$.
The \emph{perfect $t$-error-correcting Lee code} over $A$ is defined in a similar way if $A$ is the ring $\Z_m$ of integers $\pmod m$, where the Lee distance of two words $x=(x_{1},x_{2},\cdots,x_{n}),y=(y_{1},y_{2},\cdots,y_{n})\in \Z_{m}^{n}$ is defined as follows:
  $d_{L}(x,y)=\sum\limits_{i=1}^{n}\min(|x_{i}-y_{i}|,m-|x_{i}-y_{i}|)$.
Recall that the \emph{Hamming graph} $H(n,m)$ is the Cartesian product of $n$ copies of the complete graph $K_m$ and the \emph{grid-like graph}
$L(n, m)$ is the Cartesian product of $n$ copies of the $m$-cycle $C_{m}$. It is obvious that the perfect $t$-error-correcting Hamming codes over an alphabet of cardinality $m$ are precisely the perfect $t$-codes in $H(n,m)$. Similarly, the perfect $t$-error-correcting Lee codes over $\Z_m$ ($m\geq3$) are precisely the perfect $t$-codes in $L(n, m)$.

A graph $\Gamma$ is called $G$-\emph{vertex-transitive} if $G$ is a subgroup of $\Aut(\Gamma)$ acting transitively on $V(\Gamma)$. In particular, a $G$-\emph{vertex-transitive} graph is called a \emph{Cayley graph} on $G$ if $G$ acts \emph{freely} on the vertex set (nonidentity elements fix no vertex). It is well known and easy to check that both $K_m$ and $C_m$ are Cayley graphs on the cyclic group $\Z_m$. Therefore $H(n,q)$ and $L(n,m)$ are both Cayley graphs on the group $\Z_m^n$. Thus perfect $t$-codes in Cayley graphs are generalization of perfect $t$-error-correcting Hamming codes or Lee codes.

Perfect codes in Cayley graphs have received considerable attention in recent
years; see \cite[Section 1]{HXZ18} for a brief survey and \cite{DSLW16, FHZ, Ta13, Z15,ZZ2021} for a few recent papers. In particular, perfect codes in Cayley graphs which are subgroups of the underlying groups are especially interesting since they are generalizations of perfect linear codes \cite{Va75} in the classical setting. Another interesting avenue of research is to study when a given subset of a group is a perfect code in some Cayley graph of the group. In this regard the following concepts were introduced by Huang et al. in \cite{HXZ18}: A subset $C$ of a group $G$ is called a {\em (total) perfect code} of $G$ if there exists a Cayley graph of $G$ which admits $C$ as a (total) perfect code; a (total) perfect code of $G$ which is also a subgroup of $G$ is called a {\em subgroup (total) perfect code} of $G$. Huang
et al. \cite{HXZ18} established a sufficient and necessary
condition for the normal subgroups of a given group to be subgroup (total) perfect codes, and proved that every normal subgroup of a group of odd order or odd index is a subgroup perfect code. Ma et al. \cite{MWWZ19} proved that all subgroups of a group are subgroup perfect codes if and only if this group does not contain elements of order $4$.
Very recently, Zhang and Zhou \cite{Z15} generalized several results about normal subgroups in \cite{HXZ18} to general subgroups, and in particular they proved that every subgroup of a group of odd order or odd index is a subgroup perfect code.

Although every Cayley graph is vertex-transitive, there exist other vertex-transitive graphs that are not Cayley graphs. Somewhat surprisingly, there are very few known results on the perfect codes in vertex-transitive graphs in the literature. This motivates us to write the present paper. It is well known that a graph is $G$-vertex transitive if and only if it can be represented as a coset graph $\Cos(G,H,U)$ (see Section \ref{sec:pre} for the details). To study the perfect codes in vertex-transitive graphs, we generalize the concept subgroup (total) perfect code of a finite group as follows:
Given a finite group $G$ and a subgroup $H$ of $G$, a subgroup $A$ of $G$ containing $H$ is called a \emph{subgroup (total) perfect code of the pair $(G,H)$} if there exists a coset graph $\Cos(G,H,U)$ such that the set consisting of left cosets of $H$ in $A$ is a (total) perfect code in $\Cos(G,H,U)$. In this paper, we give a necessary and sufficient condition for a subgroup $A$ of $G$ containing $H$ to be a (total) perfect code of the pair $(G,H)$ and generalize a few known results of subgroup (total) perfect codes of groups.

The rest of the paper is organized as follows. In Section \ref{sec:pre}, we recall the definition of coset graph and give a characterization of the relationship between subgroup perfect codes  and subgroup total perfect codes of a pair $(G,H)$. In Section \ref{sec:pc}, we prove that $A$ is a perfect code of a pair $(G,H)$ if and only if there exists a left transversal $X$ of $A$ in $G$ such that $XH=HX^{-1}$ (Theorem \ref{lt}). Based on Theorem \ref{lt}, we generalize a few results about subgroup perfect codes in \cite{ZZ2021}. In Section \ref{sec:tpc}, we deduce a few results on total perfect codes which are parallel to some results about perfect codes in Section \ref{sec:pc}. In Section \ref{sec:ep}, we construct several examples and propose a few problems for further research. In particular, we show that $S_{n-1}$ is a perfect code of $(S_{n},S_{3})$ for every positive integer $n\geq5$.
\section{Preliminaries}
\label{sec:pre}

For a group $G$, we write $H\leq G$ to signify that $H$ is a subgroup of $G$ and we set $A/_{\ell}H:=\{aH\mid a\in A\}$ for all subset $A$ of $G$. For more group-theoretic terminology and notation used in the paper, please refer to  \cite{KS2004}. The following proposition gives a nice way to represent vertex-transitive graphs. For the proof of this proposition, see \cite{Lor}.

\begin{pro}\label{cos}
Let $G$ be a group and $H\leq G$. Let $U$ be a union of some double cosets of $H$ in $G$ such that $H\cap U=\emptyset$ and $U^{-1}=U$. Define a graph $\Gamma=\Cos(G,H,U)$ as follows: the vertex set of $\Gamma$ is $G/_{\ell}H$, and two vertices $g_{1}H$ and $g_{2}H$ are adjacent if and only if $g_{1}^{-1}g_{2}\in U$. Then we have
\begin{enumerate}
  \item $\Gamma$ is a well defined graph and its valency is the number of left cosets of $H$ in $U$;
  \item $G$ acts transitively on the vertex-set of $\Gamma$ by left multiplication and the kernel of this action is the core of $H$ in $G$;
  \item every vertex-transitive graph can be represented as $\Cos(G,H,U)$ for some $G$, $H$ and $U$.
\end{enumerate}
\end{pro}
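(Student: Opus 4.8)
The plan is to treat items (1) and (2) as direct verifications and to establish item (3) by an explicit construction from a point stabilizer. For item (1), the key observation is that, since $U$ is a union of double cosets of $H$, it is closed under left and right multiplication by elements of $H$; hence if $g_1H=g_1'H$ and $g_2H=g_2'H$, writing $g_1'=g_1h_1$ and $g_2'=g_2h_2$ with $h_1,h_2\in H$ gives $(g_1')^{-1}g_2'=h_1^{-1}(g_1^{-1}g_2)h_2\in U$ if and only if $g_1^{-1}g_2\in U$, so adjacency does not depend on the choice of coset representatives. The graph has no loops because $1\in H$ and $H\cap U=\emptyset$, and it is undirected because $g_1^{-1}g_2\in U$ if and only if $g_2^{-1}g_1=(g_1^{-1}g_2)^{-1}\in U^{-1}=U$; thus $\Gamma$ is a well-defined simple graph. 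The neighbors of the base vertex $H$ are precisely the cosets $gH$ with $g\in U$, and since $U$ is a union of left cosets of $H$ there are $|U|/|H|$ of them; once vertex-transitivity is known, $\Gamma$ is regular of this valency.

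For item (2), left multiplication $gH\mapsto xgH$ is well defined for each $x\in G$ and preserves adjacency because $(xg_1)^{-1}(xg_2)=g_1^{-1}g_2$; it acts transitively since $g_2g_1^{-1}$ sends $g_1H$ to $g_2H$. An element $x$ lies in the kernel exactly when $xgH=gH$, i.e. $g^{-1}xg\in H$, for every $g\in G$, which is precisely the condition $x\in\bigcap_{g\in G}gHg^{-1}$, the core of $H$ in $G$. For item (3), let $\Sigma$ be a graph and $G\leq\Aut(\Sigma)$ transitive on $V(\Sigma)$; fix $v\in V(\Sigma)$, put $H=G_v$, and set $U=\{g\in G: g(v)\sim v\}$. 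The orbit-stabilizer correspondence gives a bijection $G/_{\ell}H\to V(\Sigma)$, $gH\mapsto g(v)$. Since each $h\in H$ is an automorphism fixing $v$, the set $U$ is closed under left and right multiplication by $H$, hence a union of double cosets; moreover $H\cap U=\emptyset$ because $\Sigma$ is loopless, and $U^{-1}=U$ because adjacency is symmetric. Finally, applying the automorphism $g_1$ to the relation $(g_1^{-1}g_2)(v)\sim v$ shows that $g_1^{-1}g_2\in U$ if and only if $g_2(v)\sim g_1(v)$, so the bijection above is in fact a graph isomorphism $\Cos(G,H,U)\cong\Sigma$.

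None of these steps is difficult; the only places needing a little care are the representative-independence of adjacency in item (1) and the verification that the natural map in item (3) respects edges and non-edges, not merely vertices. A complete proof can also be found in \cite{Lor}.
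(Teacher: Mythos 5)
Your proof is correct: the well-definedness of adjacency via $HUH=U$, the loop-freeness from $H\cap U=\emptyset$, the symmetry from $U^{-1}=U$, the kernel computation $\bigcap_{g\in G}gHg^{-1}$, and the reconstruction of a $G$-vertex-transitive graph from a point stabilizer $H=G_v$ with $U=\{g\in G: g(v)\sim v\}$ are all the standard verifications, carried out accurately. The paper itself gives no proof of this proposition, referring instead to \cite{Lor}, and your argument is essentially the one found there, so there is nothing to reconcile.
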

The graph $\Gamma=\Cos(G,H,U)$ defined in Proposition \ref{cos} is usually called a \emph{coset graph} on $G/_{\ell}H$. It is straightforward to check that the neighbourhood of $H$ in $\Gamma$ is $U/_{\ell}H$ and $\Gamma$ is connected if and only if $G=\langle U\rangle$.
\begin{defi}
\label{defi}
Let $G$ be a group, $H\leq G$ and $A\subseteq G$. If there exists a coset graph $\Cos(G,H,U)$ on $G/_{\ell}H$ admitting a (total) perfect code $A/_{\ell}H$, then $A$ is called a (total) perfect code of the pair $(G,H)$. If further $H\leq A\leq G$, then $A$ is called a subgroup (total) perfect code of the pair $(G,H)$.
\end{defi}
\begin{rem}
It is obvious that $\Cos(G,1,U)$ is a Cayley graph on $G$. Thus $A$ is a subgroup (total) perfect code of $(G,1)$ if and only if it is a subgroup (total) perfect code of $G$. Therefore the concept of subgroup (total) perfect code of $(G,H)$ is a generalization of the concept of subgroup (total) perfect code of $G$.
\end{rem}
The following lemma gives a characterization of the relationship between subgroup perfect codes  and subgroup total perfect codes of a pair $(G,H)$.
\begin{lem}
\label{totallem}
Let $G$ be a group and $H\leq A\leq G$. Then $A$ is a total perfect code of $(G,H)$ if and only if $A$ is a perfect code of $(G,H)$ and there exists an element $x\in N_{A}(H)\setminus H$ such that $x^{2}\in H$.
\end{lem}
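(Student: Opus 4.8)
The plan is to work throughout with the structure of the neighbourhood of the base vertex $H$. Recall that in any coset graph $\Cos(G,H,U)$ the neighbourhood of the vertex $H$ is $U/_\ell H$, and $A/_\ell H$ is a (total) perfect code precisely when the $G$-translates of the ``ball'' (resp.\ sphere) around the coset $aH$, for $aH$ ranging over $A/_\ell H$, partition $G/_\ell H$. Since a total perfect code is in particular required to dominate $H$ itself, the distinguishing feature compared to an ordinary perfect code is that $H$ must have a neighbour inside $A/_\ell H$; combined with the uniqueness condition this will force an element of $N_A(H)\setminus H$ of order $2$ modulo $H$.

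For the forward direction, suppose $A$ is a total perfect code of $(G,H)$, witnessed by $\Cos(G,H,U)$. Then $A/_\ell H$ is a fortiori a perfect code (every vertex has a unique neighbour in $A/_\ell H$, so in particular every vertex of $(G/_\ell H)\setminus (A/_\ell H)$ does, and any vertex in $A/_\ell H$, having a neighbour in $A/_\ell H$, has no neighbour equal to itself hence $A/_\ell H$ is independent — one must be slightly careful here, but independence follows because a vertex cannot be its own unique neighbour). The vertex $H\in A/_\ell H$ has a unique neighbour $aH$ with $a\in A$; adjacency says $a\in U$, and since $U=U^{-1}$ also $a^{-1}\in U$, so $a^{-1}H$ is also a neighbour of $H$ lying in $A/_\ell H$. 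Uniqueness forces $a^{-1}H=aH$, i.e.\ $a^2\in H$. Moreover $aH$ must be the unique neighbour of $H$ in the block, and applying the same argument to the coset $aH$ (whose unique in-code neighbour, by $G$-symmetry via left multiplication by $a$, is $a\cdot(a^{-1}H)=H$ — using $a^2\in H$) one sees the matching edge $\{H, aH\}$ is preserved, which pins down that left multiplication by $a$ fixes the pair $\{H,aH\}$ setwise, giving $aH=Ha$, i.e.\ $a\in N_G(H)$. Since $a\in A$ and $aH\neq H$ we get the desired $x:=a\in N_A(H)\setminus H$ with $x^2\in H$.

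For the converse, assume $A$ is a perfect code of $(G,H)$, say via $\Cos(G,H,U_0)$, and fix $x\in N_A(H)\setminus H$ with $x^2\in H$. The idea is to enlarge $U_0$ by the single double coset $HxH$. Since $x$ normalizes $H$ we have $HxH=xH=Hx$, a single left coset, so $U:=U_0\cup HxH$ still satisfies $U^{-1}=U$ (because $x^{-1}H=x H$ as $x^2\in H$) and $H\cap U=\emptyset$ (because $x\notin H$). In $\Cos(G,H,U)$ the vertex $H$ gains exactly the one new neighbour $xH\in A/_\ell H$, and by $G$-transitivity every vertex $gH$ gains exactly the one new neighbour $gxH$. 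One then checks that adding this perfect matching of ``new'' edges turns the perfect code $A/_\ell H$ into a total perfect code: every vertex now has its old unique neighbour structure plus exactly one new neighbour, and one must verify the new neighbour of each vertex still lies in, or is correctly distributed relative to, $A/_\ell H$ — concretely, that for each $gH$ exactly one of $gH$, $gxH$ lies in $A/_\ell H$ if $gH\notin A/_\ell H$, and that for $gH\in A/_\ell H$ the new neighbour $gxH$ lies in $A/_\ell H$ as well (so that the old ``no neighbour in the code'' is replaced by ``exactly one neighbour in the code''). Here $x\in A$ is used: it guarantees $A/_\ell H$ is closed under the new matching, and $x\notin H$ guarantees $gxH\neq gH$.

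The main obstacle I anticipate is the bookkeeping in the converse: one must confirm that enlarging $U_0$ by $HxH$ does not create a \emph{second} neighbour in $A/_\ell H$ for vertices already dominated by $U_0$, i.e.\ that $xH\notin U_0/_\ell H$ and more generally that the new matching edges are disjoint from the old perfect-code structure. This should follow from the fact that $A/_\ell H$ was a perfect code for $\Cos(G,H,U_0)$ together with $x\in A$: if $gH\notin A/_\ell H$ had its unique old neighbour in the code and also $gxH$ in the code, then since $x\in A$ the cosets $gH$ and $gxH$ lie in the same coset of... — more carefully, $gH\in A/_\ell H \iff gxH\in A/_\ell H$ since $x\in A$, so the two cannot split the way domination requires unless the edge is genuinely new, and a short argument rules out the degenerate overlap. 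Handling this cleanly, and likewise checking independence is not violated, is where the care is needed; the group-theoretic input ($x\in N_A(H)$, $x^2\in H$, $x\notin H$) is exactly what makes each check go through.
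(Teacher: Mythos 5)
Your converse direction is essentially the paper's argument (adjoin the double coset $HxH$ to a connection set witnessing the perfect code), and the bookkeeping you flag does go through: independence of $A/_{\ell}H$ in $\Cos(G,H,U_0)$ forces $HxH\cap U_0=\emptyset$ (as $HxH=xH\subseteq A$), and $x\in A$, $x\notin H$ ensure each code vertex gains exactly one code neighbour while non-code vertices gain none. The forward direction, however, contains a genuine error at its very first step: you claim $A/_{\ell}H$ is ``a fortiori'' a perfect code in the \emph{same} graph $\Cos(G,H,U)$, justified by an independence remark. This is false. In a total perfect code every vertex \emph{of the code} also has exactly one neighbour in the code, so $A/_{\ell}H$ induces a perfect matching on itself and is never independent (it is nonempty, containing $H$); ``a vertex cannot be its own unique neighbour'' is irrelevant to independence. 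Since being a perfect code of the pair $(G,H)$ only requires \emph{some} coset graph, the missing idea is to modify the connection set: show first that the unique code neighbour $xH$ of $H$ satisfies $(HxH)^{-1}=HxH=xH$, then set $W=U\setminus HxH$ and check that $A/_{\ell}H$ is a perfect code of $\Cos(G,H,W)$ (removing $HxH$ deletes exactly the in-code matching edges and no edge from a vertex outside $A$ to the code, because $HxH\subseteq A$).

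Your derivation of $x\in N_A(H)$ is also a non sequitur: left multiplication by $a$ swaps $H$ and $aH$ whenever $a^2\in H$, whether or not $a$ normalizes $H$, so ``the pair $\{H,aH\}$ is fixed setwise'' yields nothing about $aH=Ha$. The correct route — which simultaneously supplies the identity $HxH=xH$ needed above — uses that $U$ is a union of double cosets of $H$: for every $h\in H$ we have $hx\in HxH\subseteq U$ and $hx\in A$, so $hxH$ is a neighbour of $H$ lying in $A/_{\ell}H$; uniqueness of the code neighbour forces $hxH=xH$, i.e.\ $x^{-1}hx\in H$ for all $h\in H$, whence $x\in N_A(H)\setminus H$, and together with $x^{-1}H=xH$ (from $U^{-1}=U$ and uniqueness again) this gives $x^2\in H$ and $(HxH)^{-1}=HxH=xH$. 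Without these two repairs the forward implication is not established.
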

\begin{proof}
$\Rightarrow$) Suppose that $A$ is a total perfect code of $(G,H)$. Then there exists a coset graph $\Cos(G,H,U)$ on $G/_{\ell}H$ such that $A/_{\ell}H$ is a total perfect code of $\Cos(G,H,U)$. In particular, $A/_{\ell}H$ induces a matching in $\Cos(G,H,U)$. Thus there exists $x\in A\setminus H$ such that $xH$ is the unique vertex in $A/_{\ell}H$ which is adjacent to $H$. By the definition of coset graph, $U$ is a union of some double cosets of $H$, $U^{-1}=U$
and $x\in U$. Therefore $hx,hx^{-1}\in U$ for every $h\in H$. It follows that $hxH$ and $hx^{-1}H$ are both neighbours of $H$ in $\Cos(G,H,U)$. Note that $hxH,hx^{-1}H\in A/_{\ell}H$. By the uniqueness of $xH$, we have $hxH=hx^{-1}H=xH$. Therefore $(HxH)^{-1}=HxH=xH$. Set $W=U\setminus HxH$. Then $W^{-1}=W$ and $A/_{\ell}H$ is a perfect code of the coset graph $\Cos(G,H,W)$. Therefore $A$ is a perfect code of $(G,H)$. Since $x^{-1}H=xH$, we have $x^{2}\in H$. Recall that $x\in A\setminus H$. Since $hxH=xH$ for every $h\in H$, we get $x^{-1}hx\in H$ and it follows that $x\in N_{A}(H)\setminus H$.

$\Leftarrow$) Suppose that $A$ is a perfect code of $(G,H)$  and there exists an element $x\in N_{A}(H)\setminus H$ such that $x^{2}\in H$. Then there exists a coset graph $\Cos(G,H,W)$ on $G/_{\ell}H$ such that $A/_{\ell}H$ is a perfect code of $\Cos(G,H,W)$. Since $x\in N_{A}(H)\setminus H$ and $x^{2}\in H$, we have $(HxH)^{-1}=HxH=xH=x^{-1}H$. Set $U=W\cup HxH$. Then $U^{-1}=U$ and $A/_{\ell}H$ is a total perfect code of $\Cos(G,H,U)$. Therefore $A$ is a total perfect code of $(G,H)$.
\end{proof}
\section{Subgroup perfect codes of $(G,H)$}
\label{sec:pc}

In this section, we deduce some general results about subgroup perfect codes of a pair $(G,H)$. Our first result below gives a necessary and sufficient condition for a subgroup $A$ of $G$ containing $H$ to be a perfect code of $(G,H)$.
\begin{theorem}
\label{lt}
Let $G$ be a group and $H\leq A\leq G$. Then $A$ is a perfect code of $(G,H)$ if and only if there exists a left transversal $X$ of $A$ in $G$ such that $XH=HX^{-1}$.
\end{theorem}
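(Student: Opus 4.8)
The plan is to translate the combinatorial condition "$A/_\ell H$ is a perfect code in some coset graph $\Cos(G,H,U)$" into a purely group-theoretic statement about a left transversal. The key observation is that a coset graph is determined by its connection set $U$, which is a symmetric ($U^{-1}=U$) union of double cosets of $H$ disjoint from $H$; choosing such a $U$ is the same as choosing the neighbourhood $U/_\ell H$ of the base vertex $H$. So the whole question becomes: for which symmetric-union-of-double-cosets sets $U$ is $A/_\ell H$ a perfect code, and when does such a $U$ exist?

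First I would unwind what "$A/_\ell H$ is a perfect code in $\Cos(G,H,U)$" means vertex by vertex. Independence of $A/_\ell H$ says: for $a_1,a_2\in A$, $a_1^{-1}a_2\notin U$, i.e. $U\cap A=\emptyset$ (using that $U$ is a union of $H$-double cosets and $A\supseteq H$ is a union of $H$-cosets). The domination-uniqueness condition says: every vertex $gH$ with $g\notin A$ has exactly one neighbour in $A/_\ell H$, i.e. there is exactly one coset $aH$ ($a\in A$) with $g^{-1}a\in U$, equivalently $a\in gU^{-1}=gU$. Thus for each $g\notin A$, the set $gU$ meets $A$ in exactly one left coset of $H$. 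Counting: $|U|/|H|$ must equal $[A:H]$'s... more precisely, summing over the $[G:A]-1$ non-code vertices and the structure of $U$, one finds $U$ must be a union of cosets of $H$ whose total "weight" forces $A$ to partition $G$ in a transversal-like way. This is where the transversal $X$ enters: I would show that the condition "$gU$ meets $A$ in exactly one $H$-coset for every $g\notin A$, and $U\cap A=\emptyset$" is equivalent to saying that $U\cup A$ (or rather $HUH \cup A$) covers $G$ with the non-$A$ part being exactly $G\setminus A$ written as a disjoint union of the cosets $aX^{-1}H$... Concretely: pick a left transversal $X=\{x_i\}$ of $A$ in $G$ with $x_1=1$; then $G=\bigsqcup_i x_iA$. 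For the perfect code to work, the neighbour of $x_iH$ in $A/_\ell H$ (for $i\neq 1$) picks out, after left-multiplying by $A$, a well-defined "direction," and forcing symmetry $U=U^{-1}$ together with "union of double cosets" is exactly the condition $XH = HX^{-1}$.

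Here is the cleaner way I would actually organize the two directions. ($\Rightarrow$) Given the coset graph with $A/_\ell H$ a perfect code, for each non-identity-coset representative we read off the unique code-neighbour and use it to build $X$: for $g\notin A$ let $a(g)\in A$ be the unique element (mod $H$) with $g^{-1}a(g)\in U$; set $x := a(g)^{-1}g$, so $x\in U$ and $gH = a(g) x H$ with $x\in U$. One checks the cosets $\{x : g \text{ ranges over coset reps of } A \text{ in } G\}\cup\{1\}$ form a left transversal $X$ of $A$, that $XH\subseteq HXH = $ a union of the relevant double cosets, and that $U^{-1}=U$ forces $XH=HX^{-1}$ (since $x\in U\Leftrightarrow x^{-1}\in U$, and each $H$-double coset in $U$ contributes exactly one transversal element up to the appropriate identification). ($\Leftarrow$) Given $X$ with $XH=HX^{-1}$, set $U := \bigcup_{x\in X\setminus H} HxH$. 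Then $U^{-1}=U$ because $HXH=\overline{HX^{-1}H}$... wait, $(HxH)^{-1}=Hx^{-1}H$ and $XH=HX^{-1}$ gives $HXH=HX^{-1}H$, so $U^{-1}=\bigcup_{x} Hx^{-1}H = \bigcup HxH = U$; also $H\cap U=\emptyset$ since $X$ is a transversal of $A\supseteq H$ so $x\notin A$ for $x\neq 1$. Then verify directly that $A/_\ell H$ is independent (need $HXH\cap A=\emptyset$, from $X\cap A=\{1\}$ and... this needs care, see below) and that each $gH\notin A/_\ell H$ has a unique neighbour in $A/_\ell H$, using $g\in aX^{-1}$ uniquely for some $a\in A$.

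\textbf{The main obstacle} I anticipate is the double-coset bookkeeping: the condition "$U$ is a union of double cosets of $H$" is strictly stronger than "$U$ is a union of left cosets of $H$," and it is what makes the problem nontrivial — we are not free to choose an arbitrary neighbourhood. In the forward direction, showing that the transversal elements $x$ can be chosen coherently so that $XH=HX^{-1}$ (rather than merely getting \emph{some} symmetric union of left cosets) requires using that $U$ is $H$-bi-invariant: $x\in U$ implies $Hx^{-1}H\subseteq U$, so I can replace any transversal element by another representative of its double coset without leaving $U$, and this flexibility is exactly enough to arrange $XH=HX^{-1}$. In the backward direction, the subtle point is independence: I must check $HxH\cap A=\emptyset$ for $x\in X\setminus H$, which follows because $HxH = \bigcup_{h\in H} hxH$ and each $hxH$ lies in the coset $(hx)A$; since $h\in A$, $(hx)A = xA \neq A$, so no element of $HxH$ lies in $A$. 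Once these bi-invariance manipulations are handled carefully, the rest is a routine double-counting argument showing "exactly one neighbour" $\Leftrightarrow$ "$X$ is a genuine transversal."
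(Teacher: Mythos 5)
Your overall strategy coincides with the paper's (read a transversal off the connection set in one direction, build $U=H(X\setminus\{\cdot\})H$ in the other), but your forward direction contains a step that fails as written. You take a representative $g$ of each left coset of $A$, let $a(g)H$ be the unique code-neighbour of $gH$, and set $x:=a(g)^{-1}g$, claiming these elements (together with $1$) form a left transversal. They do not: $a(g)^{-1}g$ lies in the \emph{right} coset $Ag$, not in $gA$, so as $g$ runs over left-coset representatives these elements need not represent distinct left cosets. Concretely, take $G=S_3$, $A=\langle(12)\rangle$, $H=1$, $U=\{(13),(23)\}$, so $A$ is a perfect code of $\Cos(G,1,U)$; choosing the representatives $g_1=(123)$ and $g_2=(23)$ of the two nontrivial left cosets, the unique code-neighbours are $(12)$ and $e$, and $a(g_1)^{-1}g_1=(12)(123)=(23)=a(g_2)^{-1}g_2$, so your set is $\{1,(23)\}$ and misses the coset $(13)A$ entirely. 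The correct twist (this is the paper's key move) is to look at the vertex $t^{-1}H$ as $t$ runs over a fixed left transversal $T$: its unique code-neighbour $a_tH$ gives $ta_t\in U$ with $ta_t\in tA$, so $X=\{1\}\cup\{ta_t\}$ really is a left transversal with $X\setminus\{1\}\subseteq U$; one then shows $U=(X\setminus\{1\})H=H(X\setminus\{1\})H$ and applies $U^{-1}=U$ to get $XH=HX^{-1}$. (Your elements, indexed instead by right-coset representatives, would naturally yield the right-transversal criterion $Y^{-1}H=HY$, which still requires an inversion step to reach the stated left version; the left/right bookkeeping is precisely what goes wrong in your sketch, and the "replace by another representative of its double coset" flexibility does not repair the transversal property.)

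Two smaller points in your backward direction also need repair. First, your justification of independence is incorrect: $(hx)A=xA$ does \emph{not} follow from $h\in A$ (left multiplication does not preserve left cosets); the correct one-line argument is that $h_1xh_2\in A$ with $h_1,h_2\in H\leq A$ forces $x\in A$, so $HxH\cap A=\emptyset$ whenever $x\notin A$. Second, defining $U=\bigcup_{x\in X\setminus H}HxH$ tacitly assumes the unique element of $X\cap A$ lies in $H$; in general $X\cap A=\{y\}$ with $y$ possibly in $A\setminus H$ (this genuinely occurs, e.g.\ in the total-perfect-code situations of Theorem \ref{totallt}), and including $HyH$ in $U$ would destroy independence. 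One must either first normalize the transversal (replacing $y$ by $1$, which preserves $XH=HX^{-1}$) or, as the paper does, set $U=H(X\setminus\{y\})H$ and use $XH=HX^{-1}$ to deduce $yH=Hy^{-1}$, which is needed to conclude $U^{-1}=U$. With these repairs your argument becomes essentially the paper's proof; as submitted, the forward direction does not go through.
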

\begin{proof}
$\Rightarrow$) Let $A$ be a perfect code of $(G,H)$. Then there exists a coset graph $\Gamma:=\Cos(G,H,U)$ such that $A/_{\ell}H$ is a perfect code of $\Gamma$. By the definition of coset graph, we get $H\cap U=\emptyset$ and $U^{-1}=U$. Assume $|G:A|=n$ and let $T=\{1,t_{1},\ldots,t_{n-1}\}$ be a left transversal of $A$ in $G$. Then $t_{i}\notin A$ for any $i\in \{1,\ldots,n-1\}$. Since $H\leq A$, we get $t_{i}^{-1}H\notin A/_{\ell} H$. Since $A/_{\ell}H$ is a perfect code of $\Gamma$, $t_{i}^{-1}H$ is adjacent to a unique vertex in $A/_{\ell}H$.
Therefore there is a unique $a_{i}H\in A/_{\ell}H$ such that
$t_{i}a_{i}\in U$. Set $X=\{1,t_{1}a_{1},\ldots,t_{n-1}a_{n-1}\}$. Then $X$ is a left transversal of $A$ in $G$ and $X\setminus\{1\}\subseteq U$. Since $U$ is a union of some double cosets of $H$ in $G$, we have $H(X\setminus\{1\})H\subseteq U$. We will further prove $U\subseteq(X\setminus\{1\})H$. Take an arbitrary $u\in U$. Since $T$ is a left transversal of $A$ in $G$, $u$ can be uniquely written as $u=ta$ where $t\in T$ and $a\in A$. It follows that $ta\in U$ and therefore $t^{-1}H$ and $aH$ are adjacent in $\Gamma$. Since $A/_{\ell}H$ is a perfect code of $\Gamma$, $A/_{\ell}H$ is an independent set of $\Gamma$. Therefore $t\notin A$. Since $T=\{1,t_{1},\ldots,t_{n-1}\}$, we have $t=t_{i}$ for some $i\in\{1,\ldots,n-1\}$. By the uniqueness of $a_{i}H$, we have $aH=a_{i}H$ and then $u=t_{i}a_{i}h$ for some $h\in H$. Therefore $u\in (X\setminus\{1\})H$ and it follows that $U\subseteq (X\setminus\{1\})H$. Now we have proved that $H(X\setminus\{1\})H\subseteq U\subseteq(X\setminus\{1\})H$. Therefore $U=H(X\setminus\{1\})H=(X\setminus\{1\})H$.
Since $U^{-1}=U$, we have $H(X\setminus\{1\})^{-1}=U^{-1}=U=(X\setminus\{1\})H$ and it follows that $XH=HX^{-1}$.

$\Leftarrow$)
Let $X$ be a left transversal of $A$ in $G$ such that $XH=HX^{-1}$. Then $HXH=HHX^{-1}=HX^{-1}$, $(HXH)^{-1}=HX^{-1}H=XH$ and it follows that
\begin{equation*}
(HXH)^{-1}=HXH=XH=HX^{-1}.
\end{equation*}
Since $X$ is a left transversal of $A$ in $G$, $X\cap A$ contains a unique element, say $y$. Since $XH=HX^{-1}$, for every $h\in H$ there exists $w\in X$ such that $hy^{-1}\in wH$.
Since $H\leq A$ and $hy^{-1}\in A$, we have $w\in A$. Therefore $w=y$ and it follows that $Hy^{-1}=Hy$. Set $U:=H(X\setminus\{y\})H$. Since $(HXH)^{-1}=HXH=XH=HX^{-1}$ and $Hy^{-1}=yH$, we have $U^{-1}=U=(X\setminus\{y\})H=H(X\setminus\{y\})^{-1}$. Furthermore, $A\cap U=H\cap U=\emptyset$. In particular, we obtain a coset graph $\Gamma:=\Cos(G,H,U)$ on $G/_{\ell}H$. Since $A\cap U=\emptyset$,
$a^{-1}b\notin U$ for any $a,b\in A$ and it follows that $A/_{\ell}H$ is an independent set of $\Gamma$. Now consider an arbitrary vertex $gH\in G/_{\ell}H$ with $g\notin A$. Since $X$ is a left transversal of $A$ in $G$, $g^{-1}$ can be uniquely written as $g^{-1}=xa^{-1}$ where $x\in X$ and $a\in A$. Since $g^{-1}\notin A$, we obtain $x\neq y$. Therefore $x\in U$, that is, $g^{-1}a\in U$. It follows that $aH$ is adjacent to $gH$ in $A/_{\ell}H$. If there is $b\in A$ such that $bH$ is also adjacent to $gH$ in $A/_{\ell}H$, then $g^{-1}b\in U$. Since $U=(X\setminus\{y\})H$, we have $g^{-1}b=zh$ for some $z\in X$ and $h\in H$. Therefore $g^{-1}=zhb^{-1}$. Note that $hb^{-1}\in A$. By the uniqueness of the factorization $g^{-1}=xa^{-1}$, we have $z=x$ and $a^{-1}=hb^{-1}$. Thus $aH=bH$ and it follows that $aH$ is the unique vertex in $A/_{\ell}H$ which is adjacent to $gH$. Therefore $A/_{\ell}H$ is a perfect code of $\Gamma$, that is, $A$ is a perfect code of $(G,H)$.
\end{proof}
If we replace the word `left' with `right' in Theorem \ref{lt}, this theorem still holds. Actually, we have the following theorem.
\begin{theorem}
\label{rt}
Let $G$ be a group and $H \leq A \leq G$. Then $A$ is a perfect code of $(G,H)$ if and only if there exists a right transversal $Y$ of $A$ in $G$ such that $Y^{-1}H=HY$.
\end{theorem}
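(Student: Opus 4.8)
The plan is to derive Theorem~\ref{rt} directly from Theorem~\ref{lt} by means of the inversion map $X \mapsto X^{-1} := \{x^{-1} : x \in X\}$ on subsets of $G$. The elementary fact I would establish first is that this map interchanges left transversals and right transversals of $A$ in $G$: if $G = \bigsqcup_{x \in X} xA$, then inverting every element gives $G = \bigsqcup_{x \in X} A x^{-1}$, and the right cosets $Ax^{-1}$ are pairwise distinct because $Ax^{-1} = Ay^{-1}$ is equivalent to $x^{-1}y \in A$, i.e.\ to $xA = yA$; the same computation run backwards shows that $Y$ is a right transversal precisely when $Y^{-1}$ is a left transversal. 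Hence $X \mapsto X^{-1}$ is a bijection between the left transversals and the right transversals of $A$ in $G$, with $|X| = |X^{-1}| = |G:A|$ throughout.

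Next I would check that this bijection carries the symmetry condition of Theorem~\ref{lt} onto the one in Theorem~\ref{rt}. Writing $Y = X^{-1}$, so that $X = Y^{-1}$, one has $XH = Y^{-1}H$ and $HX^{-1} = HY$ as subsets of $G$; hence the equality $XH = HX^{-1}$ holds if and only if $Y^{-1}H = HY$ holds. Consequently, ``there is a left transversal $X$ of $A$ in $G$ with $XH = HX^{-1}$'' is logically equivalent to ``there is a right transversal $Y$ of $A$ in $G$ with $Y^{-1}H = HY$''.

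Finally I would combine these two observations with Theorem~\ref{lt}: by that theorem $A$ is a perfect code of $(G,H)$ if and only if a left transversal $X$ with $XH = HX^{-1}$ exists, and by the previous two paragraphs this happens if and only if a right transversal $Y$ with $Y^{-1}H = HY$ exists, which is exactly the statement of Theorem~\ref{rt}.

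I do not expect a genuine obstacle here; the content is entirely the bookkeeping that inversion swaps ``left'' and ``right'' while transporting the condition $XH = HX^{-1}$ to $Y^{-1}H = HY$. The only place warranting a little care is keeping the inverses in the correct order in the coset-distinctness step and in the identity $HX^{-1} = HY$. As an alternative one could instead rerun the proof of Theorem~\ref{lt} word for word, replacing every left coset $gA$ by the right coset $Ag$ and using the obvious right-handed reformulation of the coset graph; but the inversion argument is shorter and exhibits the symmetry cleanly, so that is the route I would take.
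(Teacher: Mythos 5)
Your proposal is correct and is essentially the paper's own argument: the paper also deduces Theorem~\ref{rt} from Theorem~\ref{lt} by the substitution $Y = X^{-1}$, relying on inversion exchanging left and right transversals; you merely spell out the coset-distinctness check that the paper leaves implicit.
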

\begin{proof}
By Theorem \ref{lt}, $A$ is a perfect code of $(G,H)$ if and only if there exists a left transversal $X$ of $A$ in $G$ such that $XH=HX^{-1}$. Replacing $X^{-1}$ by $Y$, we have that $A$ is a perfect code of $(G,H)$ if and only if there exists a right transversal $Y$ of $A$ in $G$ such that $Y^{-1}H=HY$.
\end{proof}
Theorem \ref{lt} has several interesting corollaries. The first one below is obvious and we omit its proof.
\begin{coro}
\label{lrt}
Let $G$ be a group and $H \leq A \leq G$. Let $X$ be a left transversal of $A$ in $G$. If $XH=HX^{-1}$, then  $A/_{\ell}H$ is a perfect code of the coset graph $\Cos(G,H,U)$ where $U=H(X\setminus A)H$.
\end{coro}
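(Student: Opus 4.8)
The plan is to recognize that this corollary is just a repackaging of the $\Leftarrow$ direction of Theorem~\ref{lt}, so I would extract the relevant construction from that proof rather than redo anything. The only genuine point to check is that the set $U=H(X\setminus A)H$ in the statement coincides with the set $U$ built in the proof of Theorem~\ref{lt}.

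First I would note that, since $A$ is a subgroup and $X$ is a left transversal of $A$ in $G$, the intersection $X\cap A$ consists of exactly one element, say $y$; hence $X\setminus A=X\setminus\{y\}$, and so $H(X\setminus A)H=H(X\setminus\{y\})H$, which is precisely the $U$ occurring in the $\Leftarrow$ argument of Theorem~\ref{lt}. Then I would invoke that argument essentially verbatim: from $XH=HX^{-1}$ alone one gets $Hy^{-1}=yH$, hence $U^{-1}=U$ and $H\cap U=A\cap U=\emptyset$, so $\Cos(G,H,U)$ is a well-defined coset graph on $G/_{\ell}H$; since $A\cap U=\emptyset$, the set $A/_{\ell}H$ is independent in this graph; and for each $g\notin A$, the uniqueness of the factorization $g^{-1}=xa^{-1}$ with $x\in X$ and $a\in A$ forces $gH$ to have exactly one neighbour in $A/_{\ell}H$. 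These three facts together say exactly that $A/_{\ell}H$ is a perfect code of $\Cos(G,H,U)$.

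I do not anticipate any obstacle here: every step of the cited argument uses only the hypothesis $XH=HX^{-1}$ and the choice of $y$ as the representative of the coset $A$ inside $X$, so nothing new needs to be proved. The sole bit of care required is the elementary observation that $X\setminus A=X\setminus\{y\}$, which is exactly why the authors flag the statement as obvious.
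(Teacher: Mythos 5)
Your proposal is correct and is exactly the argument the paper has in mind: the corollary is the construction in the sufficiency part of Theorem~\ref{lt}, with the single observation that $X\cap A=\{y\}$ gives $X\setminus A=X\setminus\{y\}$, so $H(X\setminus A)H$ is the very set $U$ built there (the paper omits the proof as obvious for precisely this reason).
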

\begin{coro}
\label{conj}
Let $G$ be a group and $H\leq A\leq G$. If $A$ is a perfect code of $(G,H)$, then for any $g \in G$, $g^{-1}Ag$ is a perfect code of $(G,g^{-1}Hg)$.
\end{coro}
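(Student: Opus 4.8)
The plan is to reduce everything to the transversal criterion of Theorem \ref{lt} and then simply conjugate the witnessing transversal. Suppose $A$ is a perfect code of $(G,H)$. By Theorem \ref{lt}, there is a left transversal $X$ of $A$ in $G$ with $XH = HX^{-1}$. Fix $g \in G$ and set $A' := g^{-1}Ag$ and $H' := g^{-1}Hg$; note $H' \leq A' \leq G$ since $H \leq A \leq G$. The natural candidate for a left transversal of $A'$ in $G$ is $X' := g^{-1}Xg$. First I would check that $X'$ is indeed a left transversal of $A'$ in $G$: the map $G \to G$, $z \mapsto g^{-1}zg$, is a bijection carrying the coset decomposition $G = \bigcup_{x \in X} xA$ to $G = \bigcup_{x \in X} g^{-1}xg\,g^{-1}Ag = \bigcup_{x' \in X'} x' A'$, and distinct cosets go to distinct cosets, so $X'$ has exactly one element in each left coset of $A'$.

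Next I would verify the symmetry condition $X'H' = H'X'^{-1}$. This is a routine computation: conjugation by $g^{-1}$ is an automorphism of $G$ as a set-with-multiplication, so
\begin{equation*}
X'H' = g^{-1}Xg \cdot g^{-1}Hg = g^{-1}(XH)g = g^{-1}(HX^{-1})g = g^{-1}Hg \cdot g^{-1}X^{-1}g = H'X'^{-1},
\end{equation*}
using $X'^{-1} = (g^{-1}Xg)^{-1} = g^{-1}X^{-1}g$. Having produced a left transversal of $A'$ in $G$ satisfying the required identity, Theorem \ref{lt} (applied to the pair $(G,H')$ and subgroup $A'$) yields that $A' = g^{-1}Ag$ is a perfect code of $(G, g^{-1}Hg)$, which is exactly the claim.

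There is essentially no hard step here: the only thing to be careful about is that Theorem \ref{lt} is stated in terms of \emph{sets} $XH$, $HX^{-1}$ rather than elementwise, so I should make sure the conjugation-is-a-bijection argument is phrased at the level of these sets (equivalently, note that $g^{-1}(ST)g = (g^{-1}Sg)(g^{-1}Tg)$ for arbitrary subsets $S,T \subseteq G$, and $(g^{-1}Sg)^{-1} = g^{-1}S^{-1}g$). Everything else is bookkeeping, which is presumably why the authors anticipate a short proof.
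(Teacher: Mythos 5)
Your proposal is correct and is exactly the paper's argument: conjugate the witnessing transversal $X$ to $g^{-1}Xg$, check it is a left transversal of $g^{-1}Ag$, verify $g^{-1}Xg\,(g^{-1}Hg)=(g^{-1}Hg)(g^{-1}Xg)^{-1}$, and apply Theorem \ref{lt} in both directions. The only difference is that you spell out the routine verifications that the paper leaves implicit.
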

\begin{proof}
By the necessity of Theorem \ref{lt}, there exists a left transversal $X$ of $A$ in $G$ such that $XH=HX^{-1}$. Set $Y=g^{-1}Xg$. Then $Y$ is a left transversal of $g^{-1}Ag$ in $G$ and $Yg^{-1}Hg=g^{-1}HgY^{-1}$. By the sufficiency of Theorem \ref{lt}, $g^{-1}Ag$ is a perfect code of $(G,g^{-1}Hg)$.
\end{proof}
\begin{coro}
\label{sub}
Let $G$ be a group and $H \leq A \leq L\leq G$. If $A$ is a perfect code of $(G,H)$, then $A$ is a perfect code of $(L,H)$.
\end{coro}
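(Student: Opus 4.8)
The plan is to invoke Theorem~\ref{lt} in both directions. Since $A$ is a perfect code of $(G,H)$, that theorem provides a left transversal $X$ of $A$ in $G$ with $XH=HX^{-1}$, and it suffices to manufacture from $X$ a left transversal $X'$ of $A$ in $L$ satisfying $X'H=H(X')^{-1}$; the converse direction of Theorem~\ref{lt}, applied to the pair $(L,H)$, then finishes the proof. The natural candidate is $X':=X\cap L$. Because $A\leq L$, each left coset $\ell A$ with $\ell\in L$ is contained in $L$, so its unique $X$-representative lies in $X\cap L$; conversely, distinct elements of $X\cap L$ represent distinct left cosets of $A$ in $L$. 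Hence $X'$ is a left transversal of $A$ in $L$.

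The substantive step is to verify $X'H=H(X')^{-1}$, and this is where the full hypothesis $H\leq A\leq L$ is used rather than only $A\leq L$. Since $H\subseteq L$, for $h\in H$ and $x\in X$ one has $hx^{-1}\in L$ if and only if $x\in L$, and likewise $xh\in L$ if and only if $x\in L$; consequently $(HX^{-1})\cap L=H(X\cap L)^{-1}$ and $(XH)\cap L=(X\cap L)H$. Combining these with $XH=HX^{-1}$ and the obvious inclusions $(X\cap L)H\subseteq L$ and $H(X\cap L)^{-1}\subseteq L$, one obtains
\[
X'H=(X\cap L)H\subseteq (HX^{-1})\cap L=H(X\cap L)^{-1}=H(X')^{-1},
\]
and, symmetrically,
\[
H(X')^{-1}=H(X\cap L)^{-1}\subseteq (XH)\cap L=(X\cap L)H=X'H,
\]
so that $X'H=H(X')^{-1}$.

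With the transversal $X'$ in hand, the sufficiency part of Theorem~\ref{lt} applied to the pair $(L,H)$ shows that $A$ is a perfect code of $(L,H)$, as required. I do not expect any genuine difficulty here, as the argument is short and entirely formal; the one place deserving a second glance is the bookkeeping in the middle paragraph, where one must be sure that passing from $X$ to $X\cap L$ respects the two-sided condition $XH=HX^{-1}$ — and it does precisely because $H\subseteq L$, so that membership of an element $hx^{-1}$ or $xh$ in $L$ is controlled solely by whether $x\in L$.
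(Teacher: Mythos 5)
Your proposal is correct and follows essentially the same route as the paper: take the transversal $X$ from Theorem~\ref{lt}, restrict it to $X\cap L$, check it is a left transversal of $A$ in $L$, and use $H\leq L$ to see that intersecting $XH=HX^{-1}$ with $L$ yields $(X\cap L)H=H(X\cap L)^{-1}$. Your element-wise justification of the identities $(XH)\cap L=(X\cap L)H$ and $(HX^{-1})\cap L=H(X\cap L)^{-1}$ is in fact a slightly more careful spelling-out of the steps the paper writes as $(X\cap L)H=XH\cap LH$ and $HX^{-1}\cap HL^{-1}=H(X^{-1}\cap L^{-1})$.
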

\begin{proof}
Let $A$ be a perfect code of $(G,H)$. By Theorem \ref{lt}, $A$ has a left transversal $X$ in $G$ such that $XH=HX^{-1}$. Therefore $G=XA$ and $|X\cap A|=1$. Set $Y=X \cap L$. Since $A \leq L\leq G$, we have $L=G\cap L= XA\cap L=(X\cap L)A=YA$ and $|Y\cap A|=|X\cap A|=1$. Therefore $Y$ is a left transversal of $A$ in $L$. Since $H \leq L$, we get $YH=(X\cap L)H=XH\cap LH=XH\cap L$. Since $XH=HX^{-1}$, it follows that
$YH=XH\cap L=HX^{-1}\cap HL^{-1}=H(X^{-1}\cap L^{-1})=HY^{-1}$.  Therefore, by Theorem \ref{lt}, $A$ is a perfect code of $(L,H)$.
\end{proof}
It is natural to consider the opposite of Corollary \ref{sub}. The following theorem is a preliminary exploration of that.
\begin{theorem}
\label{KL}
Let $G$ be a group admitting a normal subgroup $K$ and a subgroup $L$ such that $G=KL$ and $K\cap L=\{1\}$. Let $H\leq A\leq L$. Then $A$ is a perfect code of $(G,H)$ if and only if $A$ is a perfect code of $(L,H)$.
\end{theorem}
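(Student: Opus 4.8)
The plan is to treat the two implications very unevenly. The ``only if'' direction is immediate: since $H\leq A\leq L\leq G$, Corollary \ref{sub} shows at once that a perfect code $A$ of $(G,H)$ is also a perfect code of $(L,H)$.

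All the work is in the ``if'' direction, where the idea is to lift a well-behaved transversal from $L$ up to $G$ by exploiting the factorisation $G=KL$ with $K\cap L=\{1\}$. First I would observe that $K$ serves as a left transversal of $L$ in $G$: the cosets $kL$, $k\in K$, are pairwise distinct because $K\cap L=\{1\}$, and they cover $G$ because $G=KL$. Next, by Theorem \ref{lt} applied to the pair $(L,H)$, choose a left transversal $X$ of $A$ in $L$ with $XH=HX^{-1}$, an identity of subsets of $L$ and hence of $G$. I then claim that $Z:=KX=\{kx : k\in K,\ x\in X\}$ is a left transversal of $A$ in $G$. Indeed $G=KL=\bigcup_{k\in K}kL=\bigcup_{k\in K,\,x\in X}kxA$, and there are exactly $|K|\,|X|=|G:L|\,|L:A|=|G:A|$ such cosets, so it suffices to check they are pairwise disjoint; if $kxA=k'x'A$ then $k'^{-1}k=x'ax^{-1}$ for some $a\in A$, and the left side lies in $K$ while the right side lies in $L$, forcing $k=k'$, after which $xA=x'A$ and $x=x'$ since $X$ is a transversal in $L$.

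The crux is then the purely set-theoretic verification that $ZH=HZ^{-1}$, which I would derive from three elementary consequences of the normality of $K$ in $G$: $KH=HK$; $KX^{-1}=X^{-1}K$ (because $Kx^{-1}=x^{-1}K$ for each $x\in X$); and $K^{-1}=K$. Using $XH=HX^{-1}$ and then these facts in turn,
\[
ZH=KXH=K(HX^{-1})=(HK)X^{-1}=H(X^{-1}K)=HX^{-1}K ,
\]
while $(KX)^{-1}=X^{-1}K^{-1}=X^{-1}K$ gives $HZ^{-1}=HX^{-1}K$; comparing the two finishes the claim. Theorem \ref{lt} applied to $(G,H)$ then yields that $A$ is a perfect code of $(G,H)$.

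I do not anticipate a genuine obstacle: the semidirect-product hypothesis is exactly what is needed to make $KX$ a transversal and to let $K$ slide past both $H$ and $X^{-1}$. The only point requiring a little care is the bookkeeping that $Z=KX$ really is a left transversal of $A$ in $G$ — both the cardinality count and the disjointness of the cosets $kxA$ — while everything else is formal manipulation of products of subsets.
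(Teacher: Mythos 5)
Your proposal is correct and follows essentially the same route as the paper: the ``only if'' direction via Corollary \ref{sub}, and the ``if'' direction by taking a transversal $X$ of $A$ in $L$ with $XH=HX^{-1}$ (Theorem \ref{lt}), lifting it to the transversal $KX$ of $A$ in $G$, and using normality of $K$ to get $KXH=HX^{-1}K=H(KX)^{-1}$. The only cosmetic difference is that you verify transversality by checking disjointness of the cosets directly, while the paper does it by a cardinality count.
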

\begin{proof}
The necessity follows Corollary \ref{sub}. Now we prove the sufficiency. Suppose that $A$ is a perfect code of $(L,H)$. By Theorem \ref{lt}, there exists a left transversal $Y$ of $A$ in $L$ such that $YH=HY^{-1}$. Since $Y$ is a left transversal of $A$ in $L$, we have $L=YA$ and $|L|=|Y||A|$. Set $X=KY$. Since $G=KL$ and $K\cap L=\{1\}$, we have $G=KYA=XA$ and
\begin{equation*}
|G|=|KL|=|K||L|=|K||YA|=|K||Y||A|=|KY||A|=|X||A|.
\end{equation*}
Therefore $X$ is a left transversal of $A$ in $G$. Since $K$ is a normal subgroup of $G$ and $YH=HY^{-1}$, we have $XH=KYH=YHK=HY^{-1}K=HX^{-1}$. By Theorem \ref{lt}, $A$ is a perfect code of $(G,H)$.
\end{proof}
We use $S_{1}\dot{\cup}S_{2}$ to denote the union of two disjoint sets $S_{1}$ and $S_{2}$, and $\dot{\cup}_{i=1}^{m}S_{i}$ the union of pairwise disjoint sets $S_{1},\ldots,S_{m}$.
The following theorem generalizes the necessary part of \cite[Theorem 3.1]{ZZ2021}.
\begin{theorem}
\label{necessity}
Let $G$ be a group and $H\leq A \leq G$. If $A$ is a perfect code of $(G,H)$, then for any $g\in G$ either the left coset $gA$ contains an element $x$ such that $x^2\in b^{-1}Hb$ for some $b\in A$ or $A\{g,g^{-1}\}A=\dot{\cup}_{i=1}^{m}g_{i}A$ for some $g_{1},\ldots,g_{m}\in G$ where $m$ is an even integer.
\end{theorem}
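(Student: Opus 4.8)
The plan is to invoke Theorem~\ref{lt}: since $A$ is a perfect code of $(G,H)$, there is a left transversal $X$ of $A$ in $G$ with $XH=HX^{-1}$. Fix $g\in G$ and set $D:=A\{g,g^{-1}\}A=AgA\cup Ag^{-1}A$. Then $D$ is $(A,A)$-biinvariant (so $AD=DA=D$, hence $HD=DH=D$), is symmetric ($D^{-1}=D$), and is a union of left cosets of $A$; write $D=\dot{\cup}_{i=1}^{m}g_{i}A$ for the distinct ones. It suffices to prove that if $m$ is \emph{odd} then $gA$ contains an element $x$ with $x^{2}\in b^{-1}Hb$ for some $b\in A$; so assume $m$ is odd for the rest of the argument.

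First I would record the elementary consequences of $XH=HX^{-1}$: the set $XH$ is symmetric, since $(XH)^{-1}=HX^{-1}=XH$, and it is a union of double cosets of $H$, since $HXH=H(HX^{-1})=HX^{-1}=XH$. Put $Y:=X\cap D$; as $X$ meets each left coset of $A$ exactly once, $|Y|=m$. Using that $D$ is left- and right-$H$-invariant I would check that
\[
N:=XH\cap D=YH=HY^{-1},
\]
that $N^{-1}=N$, and that $N=HYH=\dot{\cup}_{y\in Y}HyH$ is a union of double cosets of $H$; on the other hand $N=\dot{\cup}_{y\in Y}yH$, so $N$ is partitioned into exactly $m$ left cosets of $H$.

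The heart of the proof is a parity count on the double cosets of $H$ inside $N$. Inversion is an involution on the finite set of those double cosets. For $y\in Y$, the number of left cosets of $H$ contained in $HyH$ equals $[H:H\cap yHy^{-1}]$, and this equals $[H:H\cap y^{-1}Hy]$ by conjugation, i.e.\ the number of left $H$-cosets in the inverse double coset $Hy^{-1}H$. Hence every inversion-orbit $\{HyH,Hy^{-1}H\}$ with $HyH\neq Hy^{-1}H$ accounts for an even number of left $H$-cosets of $N$, so
\[
m\equiv\sum_{HyH=Hy^{-1}H}[H:H\cap yHy^{-1}]\pmod 2.
\]
Since $m$ is odd, some double coset is self-inverse: $HyH=Hy^{-1}H$ for some $y\in Y$. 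In particular $y\in Hy^{-1}H$, say $y=h_{1}y^{-1}h_{2}$ with $h_{1},h_{2}\in H$; rearranging gives $yh_{1}^{-1}y=h_{2}\in H$, so $x:=yh_{1}^{-1}\in yH\subseteq yA$ satisfies $x^{2}=h_{2}h_{1}^{-1}\in H$.

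It remains to transfer this from $yA$ to $gA$. Since $y\in Y\subseteq D$, the coset $yA$ is one of the $g_{i}A$, hence $yA\subseteq AgA$ or $yA\subseteq Ag^{-1}A$; write $x=ag^{\varepsilon}a'$ with $a,a'\in A$ and $\varepsilon\in\{1,-1\}$. Conjugating $x^{2}\in H$ by $a$ gives $(g^{\varepsilon}a'a)^{2}\in a^{-1}Ha$. If $\varepsilon=1$ this already exhibits $ga'a\in gA$ with square in $a^{-1}Ha$. If $\varepsilon=-1$, then $g^{-1}a'a\in g^{-1}A$ has square in $a^{-1}Ha$, so its inverse $cg:=(g^{-1}a'a)^{-1}\in Ag$ (with $c\in A$) has square in $a^{-1}Ha$; conjugating once more by $c^{-1}$ shows $(gc)^{2}\in(ac)^{-1}H(ac)$ with $gc\in gA$. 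Either way $gA$ contains an element whose square lies in an $A$-conjugate of $H$, as required. The step I expect to be the main obstacle is isolating the correct invariant set $N=YH=HY^{-1}$ and seeing that the parity of $m$ is controlled by the double cosets of $H$ inside it; granted that, the inverse-pair cancellation (via $[H:H\cap yHy^{-1}]=[H:H\cap y^{-1}Hy]$), the extraction of $x=yh_{1}^{-1}$, and the final conjugation bookkeeping are all short.
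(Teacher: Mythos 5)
Your proof is correct and follows essentially the same route as the paper's: both restrict the transversal supplied by Theorem \ref{lt} to the set $A\{g,g^{-1}\}A$, obtain a symmetric $H$-biinvariant set from $XH=HX^{-1}$, pair the $H$-double cosets inside it with their inverses to force a self-inverse double coset when $m$ is odd, extract an element of it whose square lies in $H$, and conjugate that element into $gA$. The only cosmetic differences are that you count left $H$-cosets per double coset via $[H:H\cap yHy^{-1}]$ where the paper counts transversal elements (these are the same numbers), and you handle the final transfer by the two cases $\varepsilon=\pm 1$ where the paper uses $Az_1A=Az_1^{-1}A=AgA$ to write $z_1=bgc$ directly.
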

\begin{proof}
Suppose that $A$ is a perfect code of $(G,H)$. By Theorem \ref{lt}, $A$ has a left transversal $T$ in $G$ such that $TH=HT^{-1}$.

Take an arbitrary $g\in G$. If $g\in A$, then $x\in gA$ and $x^{2}\in H$ for each $x\in H$. Now assume $g\in G\setminus A$. It is obvious that $A\{g,g^{-1}\}A$ is a disjoint union of some left cosets of $A$ in $G$. Suppose that $A\{g,g^{-1}\}A=\dot{\cup}_{i=1}^{m}g_{i}A$ for some $g_{1},\ldots,g_{m}\in G$ where $m$ is an odd integer. It suffices to prove that $gA$ contains an element $x$ such that $x^2\in b^{-1}Hb$ for some $b\in A$. Since $T$ is a left transversal of $A$ in $G$, there is a unique $x_i\in T$ such that $g_{i}A=x_{i}A$ for each $1\leq i\leq m$. Set $X=\{x_1, x_2, ...,x_m\}$. Then $A\{g,g^{-1}\}A=\dot{\cup}_{i=1}^{m}x_{i}A=XA$.
Since $X\subseteq T$ and $H\leq A$, we get $XH=TH\cap XA$. Therefore  $HX^{-1}=HT^{-1}\cap AX^{-1}$.
Since $XA=A\{g,g^{-1}\}A=(A\{g,g^{-1}\}A)^{-1}=AX^{-1}$ and $TH=HT^{-1}$, we have $XH=HX^{-1}$ and it follows that $(HXH)^{-1}=HXH=XH$.

Let $Y$ be a subset of $X$ of minimal cardinality such that $HXH=HYH$. Then $HyH\cap Hy'H=\emptyset$ for any pair of distinct elements $y,y'\in Y$.  Since $(HYH)^{-1}=HYH$, we can set $Y=\{v_1, ...,v_k,w_1, ...,w_k,z_1, ...,z_\ell\}$ such that $(Hv_iH)^{-1}=Hw_iH$ and $(Hz_jH)^{-1}=Hz_jH$ for all $1\leq i\leq k$ and $1\leq j\leq \ell$. Set $V_{i}=Hv_{i}H\cap X$, $W_{i}=Hw_{i}H\cap X$ and $Z_{j}=Hz_{j}H\cap X$ for all $1\leq i\leq k$ and $1\leq j\leq \ell$. Then
$X=(\dot{\cup}_{i=1}^{k}V_{i})\dot{\cup}(\dot{\cup}_{i=1}^{k}W_{i})\dot{\cup} (\dot{\cup}_{j=1}^{\ell}Z_{j})$. Since $HXH=XH$, we have
$HXH=(\dot{\cup}_{i=1}^{k}V_{i}H)\dot{\cup}(\dot{\cup}_{i=1}^{k}W_{i}H)\dot{\cup} (\dot{\cup}_{j=1}^{\ell}Z_{j}H)$.
Then, since $V_{1}\subseteq Hv_{1}H$ and $(X\setminus V_1)\cap Hv_{1}H=\emptyset$, we have $Hv_{1}H=V_{1}H$. Similarly, $Hv_{i}H=V_{i}H$, $Hw_{i}H=W_{i}H$ and $Hz_{j}H=Z_{j}H$ for all $1\leq i\leq k$ and $1\leq j\leq \ell$. Since $(V_{i}H)^{-1}=(Hv_{i}H)^{-1}=Hw_{i}H=W_{i}H$, we have $|V_{i}H|=|W_{i}H|$ and it follows that $|V_{i}|=|W_{i}|$. Therefore $m=|X|=2(\sum_{i=1}^{k}|V_{i}|)+\sum_{j=1}^{\ell}|Z_{j}|$. Since $m$ is an odd integer, we have $\ell\neq0$. Note that the inequality $\ell\neq0$ ensure the existence of $z_1$. Since $Hz_{1}^{-1}H=(Hz_1H)^{-1}=Hz_1H$, we have $z_{1}^{-1}H=hz_{1}H$ for some $h\in H$. It follows that $(z_{1}h)^{2}\in H$.
Since $Az_{1}A=Az_{1}^{-1}A$ and $z_1\in A\{g,g^{-1}\}A$, we have $Az_{1}A=AgA=Ag^{-1}A$. Therefore $z_1=bgc$ for some $b,c\in A$.
Set $x=gchb$. Then $x\in gA$ and
$x^{2}=gchbgchb=b^{-1}bgchbgchb=b^{-1}(z_{1}h)^{2}b\in b^{-1}Hb$.
\end{proof}
We leave it as an open problem whether the converse of Theorem \ref{necessity} holds. Now we give two corollaries of Theorem \ref{necessity}.
\begin{coro}
\label{notp}
Let $G$ be a group and $H\leq A \leq G$. If there exists an element $x\in G\setminus A$ such that $x^{2}\in A$, $xA$ contains no element whose square is contained in a conjugate of $H$ in $A$ and $|A:A\cap xAx^{-1}|$ is an odd integer, then $A$ is not a perfect code of $(G,H)$.
\end{coro}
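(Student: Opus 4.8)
The plan is to deduce this immediately from Theorem~\ref{necessity} by a proof by contradiction. Suppose $A$ is a perfect code of $(G,H)$ and apply Theorem~\ref{necessity} to the given element $g=x$. Since $x\in G\setminus A$ and, by hypothesis, $xA$ contains no element whose square lies in a conjugate of $H$ in $A$, the first alternative of Theorem~\ref{necessity} is excluded. Hence the second alternative must hold: $A\{x,x^{-1}\}A=\dot{\cup}_{i=1}^{m}g_{i}A$ for some $g_{1},\ldots,g_{m}\in G$ with $m$ even, i.e.\ $A\{x,x^{-1}\}A$ splits into an even number of left cosets of $A$.

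Next I would use the hypothesis $x^{2}\in A$ to collapse this double coset. From $x^{2}\in A$ one gets $x^{-1}=x^{-2}x\in Ax\subseteq AxA$, so $Ax^{-1}A\subseteq AxA$; the symmetric computation (valid since $(x^{-1})^{2}=x^{-2}\in A$ as well) gives the reverse inclusion, so $Ax^{-1}A=AxA$ and therefore $A\{x,x^{-1}\}A=AxA$. Thus $m$ equals the number of distinct left cosets of $A$ contained in $AxA$. Writing $AxA=\bigcup_{a\in A}(ax)A$ and observing that $(ax)A=(a'x)A$ if and only if $(a')^{-1}a\in A\cap xAx^{-1}$, the fibres of the map $a\mapsto (ax)A$ are exactly the left cosets of $A\cap xAx^{-1}$ in $A$; hence $m=|A:A\cap xAx^{-1}|$. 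But this index is odd by hypothesis, contradicting the parity of $m$. Therefore $A$ is not a perfect code of $(G,H)$.

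I do not expect a genuine obstacle here: all the content is in Theorem~\ref{necessity}, and the only care needed is in the two bookkeeping facts above --- that $x^{2}\in A$ forces $A\{x,x^{-1}\}A=AxA$, and that a double coset $AxA$ is a union of precisely $|A:A\cap xAx^{-1}|$ left cosets of $A$. The mild subtlety is making sure the left-coset convention used in the statement of Theorem~\ref{necessity} matches the one in this standard double-coset count, but this is routine.
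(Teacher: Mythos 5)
Your proposal is correct and follows essentially the same route as the paper: both arguments reduce to Theorem~\ref{necessity} applied to $g=x$, use $x^{2}\in A$ to identify $A\{x,x^{-1}\}A$ with the single double coset $AxA$, and count its left $A$-cosets as $|A:A\cap xAx^{-1}|$, whose odd parity rules out the second alternative while the hypothesis on $xA$ rules out the first. The only difference is presentational (you argue by contradiction and spell out the bookkeeping that the paper states tersely), not mathematical.
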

\begin{proof}
Let $x$ be an element in $G\setminus A$ such that $x^{2}\in A$, $xA$ contains no element whose square is contained in a conjugate of $H$ in $A$ and $|A:A\cap xAx^{-1}|$ is an odd integer. Set $|A:A\cap xAx^{-1}|=m$. Since $axA=bxA$ if and only if $a^{-1}b\in xAx^{-1}$ for any $a,b\in A$, we have that $AxA=\dot{\cup}_{i=1}^{m}g_{i}A$ for some $g_{1},\ldots,g_{m}\in G$. Since $x^{2}\in A$, we have $A\{x,x^{-1}\}A=AxA=\dot{\cup}_{i=1}^{m}g_{i}A$. Since $m$ is an odd integer and $xA$ contains no element whose square is contained in a conjugate of $H$ in $A$, it follows from Theorem \ref{necessity} that $A$ is not a perfect code of $(G,H)$.
\end{proof}
\begin{coro}
\label{normal}
Let $G$ be a group, $H$ a subgroup of $G$ and $A$ a normal subgroup of $G$ such that $H \leq A \leq G$. If $A$ is a  perfect code of $(G,H)$, then for any $x \in G$ with $x^2 \in A$ there exists $b \in A$ such that $(xb)^2 \in H$.
\end{coro}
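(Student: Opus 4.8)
The plan is to deduce this from Theorem \ref{necessity} applied to $g=x$, using normality of $A$ to eliminate the second alternative offered by that theorem. First I would unwind what the two alternatives say in our situation. Because $A\trianglelefteq G$ we have $Ax=xA$, hence $AxA=xA$ and $Ax^{-1}A=x^{-1}A$; and $x^{2}\in A$ forces $x^{-1}A=xA$. Therefore $A\{x,x^{-1}\}A=xA$ is a single left coset of $A$ in $G$, so whenever we write $A\{x,x^{-1}\}A=\dot{\cup}_{i=1}^{m}g_{i}A$ we necessarily have $m=1$, an odd integer. Thus the ``even'' alternative of Theorem \ref{necessity} cannot occur, and the other alternative must hold: the coset $xA$ contains an element $y$ with $y^{2}\in b^{-1}Hb$ for some $b\in A$.

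It then remains to put $y$ into the shape demanded by the statement. Since conjugation by $b$ is an automorphism, $b y^{2} b^{-1}=(byb^{-1})^{2}\in H$, so it suffices to check that $byb^{-1}$ lies in $xA$. Writing $y=xa$ with $a\in A$, we get $byb^{-1}=(bxb^{-1})(bab^{-1})$, and $bxb^{-1}=x\,(x^{-1}bx)b^{-1}$ with $(x^{-1}bx)b^{-1}\in A$ by normality of $A$, while $bab^{-1}\in A$ trivially; hence $byb^{-1}=xc$ for some $c\in A$. Then $(xc)^{2}=(byb^{-1})^{2}\in H$, which is exactly the desired conclusion (renaming $c$ as $b$).

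The argument is short, and the only place needing a little care is the final bookkeeping---verifying that conjugating the witness $y$ by $b$ does not leave the coset $xA$---which is immediate from $A\trianglelefteq G$. The real content is the first step: normality collapses the double coset $A\{x,x^{-1}\}A$ to the single coset $xA$, whose (odd) count of left cosets of $A$ forces Theorem \ref{necessity} to supply an element of $xA$ whose square is conjugate into $H$ within $A$.
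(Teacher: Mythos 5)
Your proposal is correct and takes essentially the same route as the paper's own proof: both apply Theorem \ref{necessity} with $g=x$, use normality of $A$ (together with $x^2\in A$) to collapse $A\{x,x^{-1}\}A$ to the single coset $xA$ so that the even-coset alternative is impossible, and then conjugate the resulting witness $y=xa$ back into $xA$. The paper's explicit choice $b=x^{-1}b_1xab_1^{-1}$ is precisely your element $c$ with $xc=b_1yb_1^{-1}$, the only cosmetic difference being that the paper treats $x\in A$ as a separate trivial case while your argument handles it uniformly.
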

\begin{proof}
Let $A$ be a normal subgroup of $G$ and a perfect code of $(G,H)$. If $x\in A$, then $(xb)^2=1\in H$ where $b=x^{-1}\in A$. Now consider an arbitrary element $x\in G\setminus A$ with $x^2 \in A$. Since $A$ is normal in $G$, we have $A\{x,x^{-1}\}A=AxA=Ax^{-1}A=xA$. By Theorem \ref{necessity}, $xA$ contains an element $y$ satisfying $y^2\in b_{1}^{-1}Hb_1$ for some $b_1\in A$. Set $y=xa$ where $a\in A$. Since $A$ is normal in $G$, we have $x^{-1}b_1x\in A$. Set $b=x^{-1}b_1xab_{1}^{-1}$. Then $b\in A$.
Since $(xb)^2=(b_1xab_{1}^{-1})^2=(b_1yb_{1}^{-1})^2=b_1y^2b_{1}^{-1}$ and $y^2\in b_{1}^{-1}Hb_1$, we have $(xb)^2\in H$.
\end{proof}
The following result is a generalization of \cite[Theorem 3.7 (i)]{ZZ2021}.
\begin{theorem}
\label{quotient}
Let $G$ be a group and $H\leq A \leq G$. Let $N$ be a normal subgroup of $G$ which is contained in $A$. If $A$ is a perfect code of $(G,H)$, then $A/N$ is a perfect code of $(G/N,H/N)$.
\end{theorem}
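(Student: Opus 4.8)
The plan is to reduce everything to the algebraic criterion of Theorem~\ref{lt}, which makes it unnecessary to build a quotient coset graph by hand. Suppose $A$ is a perfect code of $(G,H)$; by Theorem~\ref{lt} there is a left transversal $X$ of $A$ in $G$ with $XH=HX^{-1}$. Write $\overline{G}=G/N$, $\overline{A}=A/N$, $\overline{H}=H/N$ (so that the pair $(G/N,H/N)$ makes sense; the argument below is unchanged if one replaces $H/N$ by $HN/N$ under the weaker hypothesis $N\leq A$), and for a subset $S\subseteq G$ let $\overline{S}$ denote the image of $S$ under the canonical epimorphism $\pi\colon G\to\overline{G}$. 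Since $\overline{H}\leq\overline{A}\leq\overline{G}$, it suffices, by the sufficiency direction of Theorem~\ref{lt} applied to $\overline{A}$ and $\overline{H}$ inside $\overline{G}$, to show that $\overline{X}$ is a left transversal of $\overline{A}$ in $\overline{G}$ and that $\overline{X}\,\overline{H}=\overline{H}\,\overline{X}^{-1}$.

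First I would check that $\overline{X}$ is a left transversal of $\overline{A}$ in $\overline{G}$. From $G=XA$ we obtain $\overline{G}=\pi(G)=\overline{X}\,\overline{A}$, so every left coset of $\overline{A}$ in $\overline{G}$ meets $\overline{X}$. On the other hand $\pi$ is injective on $X$: if $x_{1}N=x_{2}N$ with $x_{1},x_{2}\in X$, then $x_{1}^{-1}x_{2}\in N\leq A$, so $x_{1}A=x_{2}A$ and hence $x_{1}=x_{2}$ since $X$ is a left transversal of $A$ in $G$. Therefore $|\overline{X}|=|X|=|G:A|=|\overline{G}:\overline{A}|$, and combined with $\overline{X}\,\overline{A}=\overline{G}$ this shows $\overline{X}$ is a left transversal of $\overline{A}$ in $\overline{G}$.

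Next I would transport the identity $XH=HX^{-1}$ through $\pi$. A homomorphism sends a product of subsets to the product of their images, so $\overline{X}\,\overline{H}=\pi(XH)=\pi(HX^{-1})=\overline{H}\,\overline{X}^{-1}$. Hence Theorem~\ref{lt} applies to the pair $(\overline{G},\overline{H})$ with the subgroup $\overline{A}$, and we conclude that $A/N$ is a perfect code of $(G/N,H/N)$, as claimed.

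The whole argument is short and routine once Theorem~\ref{lt} is available. The one place that requires a little care is the cardinality count $|\overline{X}|=|X|$, which is exactly where the hypothesis $N\leq A$ is needed; I do not expect any serious obstacle beyond this bookkeeping.
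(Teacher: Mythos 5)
Your proposal is correct and follows essentially the same route as the paper: apply Theorem~\ref{lt} in both directions, check that the image of $X$ in $G/N$ is a left transversal of $A/N$ (using $N\leq A$), and push the identity $XH=HX^{-1}$ through the canonical epimorphism. The only difference is that you spell out the transversal verification and the $H/N$ versus $HN/N$ reading in more detail than the paper does.
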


\begin{proof}
Suppose that $A$ is a perfect code of $(G,H)$.
By Theorem \ref{lt}, there exists a left transversal $X$ of $A$ in $G$ such that $XH=HX^{-1}$. Since $N$ is a normal subgroup of $G$ and $N\leq A$, $X/N$ is a left transversal of $A/N$ in $G/N$.
Since $XH=HX^{-1}$, we have $(X/N)(H/N)=XH/N=HX^{-1}/N=(H/N)(X^{-1}/N)$.
By Theorem \ref{lt}, $A/N$ is a perfect code of $(G/N,H/N)$.
\end{proof}

\section{Subgroup total perfect codes of $(G,H)$}
\label{sec:tpc}

In this section, we deduce a few results on total perfect codes which are parallel to some results about perfect codes we obtained in Section \ref{sec:pc}.
\begin{theorem}
\label{totallt}
Let $G$ be a group and $H\leq A\leq G$. Then $A$ is a total perfect code of $(G,H)$ if and only if there exists a left transversal $X$ of $A$ in $G$ such that $XH=HX^{-1}$ and $X$ contains an element in $A\setminus H$.
\end{theorem}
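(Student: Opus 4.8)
The plan is to derive Theorem~\ref{totallt} by combining Theorem~\ref{lt} with Lemma~\ref{totallem}. The guiding remark is that the condition $XH=HX^{-1}$ is equivalent to saying that the set $XH$ is inverse-closed, since $(XH)^{-1}=HX^{-1}$ always holds; and the extra clause ``$X$ contains an element of $A\setminus H$'' is meant to encode exactly the extra datum ``there is $x\in N_A(H)\setminus H$ with $x^2\in H$'' appearing in Lemma~\ref{totallem}. I will use throughout that, because $X$ is a left transversal of $A$ in $G$ and $H\le A$, the cosets $zH$ ($z\in X$) lie in pairwise distinct cosets of $A$, exactly one of them --- namely $yH$ for the unique $y\in X\cap A$ --- being contained in $A$; the analogous statement holds for the right cosets $Hz^{-1}$.

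For the forward implication I would start from ``$A$ is a total perfect code of $(G,H)$''. By Lemma~\ref{totallem} this gives that $A$ is a perfect code of $(G,H)$ together with an element $x\in N_A(H)\setminus H$ satisfying $x^2\in H$. Theorem~\ref{lt} then supplies a left transversal $X$ of $A$ in $G$ with $XH=HX^{-1}$; let $y$ be its unique element lying in $A$. Restricting the identity $XH=HX^{-1}$ to the coset $A$ isolates $yH$ on the left-hand side and $Hy^{-1}$ on the right, so $yH=Hy^{-1}$, and hence $(X\setminus\{y\})H=XH\setminus yH$ is itself inverse-closed. Finally I replace $y$ by $x$: the set $X'=(X\setminus\{y\})\cup\{x\}$ is still a left transversal of $A$ in $G$ (as $x\in A$), we have $(xH)^{-1}=Hx^{-1}=x^{-1}H=xH$ because $x\in N_A(H)$ and $x^2\in H$, and $xH\subseteq A$ is disjoint from $(X\setminus\{y\})H$; therefore $X'H$ is inverse-closed, i.e.\ $X'H=HX'^{-1}$, and $X'$ contains $x\in A\setminus H$.

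For the converse I would take a left transversal $X$ of $A$ in $G$ with $XH=HX^{-1}$ and an element $x\in X\cap(A\setminus H)$, note that $x$ is necessarily the unique element of $X$ inside $A$, apply Theorem~\ref{lt} to get that $A$ is a perfect code of $(G,H)$, and then read off $x^2\in H$ and $x\in N_G(H)$ from the restriction $xH=Hx^{-1}$ of $XH=HX^{-1}$ to the coset $A$ (using $x^{-1}\in Hx^{-1}=xH$ to get $x^2\in H$, and then $Hx=Hx^{-1}=xH$ to get normality). Since $x\notin H$, Lemma~\ref{totallem} then yields that $A$ is a total perfect code of $(G,H)$.

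The only delicate point is the coset bookkeeping behind the phrase ``restrict $XH=HX^{-1}$ to the coset $A$'': one has to verify carefully that $XH\cap A=yH$ and $HX^{-1}\cap A=Hy^{-1}$, which comes down to $H\le A$ together with the transversal property of $X$ (and of $X^{-1}$ as a right transversal). Once that is settled, the remainder is routine manipulation of $xH=Hx$ and $x^2\in H$, and I do not anticipate further obstacles.
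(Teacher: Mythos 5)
Your proposal is correct and follows essentially the same route as the paper: both directions combine Lemma~\ref{totallem} with Theorem~\ref{lt}, and the key step of intersecting $XH=HX^{-1}$ with the coset $A$ to isolate $yH=Hy^{-1}$ (resp.\ $xH=Hx^{-1}$) is exactly the paper's argument. The only cosmetic difference is that in the forward direction you always swap the $A$-representative of the transversal for $x$, while the paper keeps the original transversal when that representative already lies in $A\setminus H$; this does not change the substance.
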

\begin{proof}
$\Rightarrow$)  Suppose that $A$ is a total perfect code of $(G,H)$. By Lemma \ref{totallem}, $A$ is a perfect code of $(G,H)$ and there exists an element $x\in N_{A}(H)\setminus H$ such that $x^{2}\in H$. Then, by Theorem \ref{lt}, there exists a left transversal $Y$ of $A$ in $G$ such that $YH=HY^{-1}$. Since $x\in N_{A}(H)\setminus H$ and $x^{2}\in H$, we have $xH=H x^{-1}$. Since $Y$ a left transversal of $A$, $Y\cap A$ contains a unique element, say $y$. If $y\in A\setminus H$, then we set $X=Y$. If $y\in H$, then we set $X=(Y\setminus \{y\})\cup\{x\}$. In both cases, we have $XH=HX^{-1}$ and $X$ contains an element in $A\setminus H$.

$\Leftarrow$) Suppose that there exists a left transversal $X$ of $A$ in $G$ such that $XH=HX^{-1}$ and $X$ contains an element in $A\setminus H$. By Theorem \ref{lt}, $A$ is a perfect code of $(G,H)$. Since $X$ is a left transversal of $A$, $X\cap A$ contains a unique element. Set$X\cap A=\{x\}$ and $T=X\setminus\{x\}$. Since $H\leq A$, we have $xH$ and $Hx^{-1}$ are both contained in $A$. Therefore $TH\cap xH=TH\cap Hx^{-1}=\emptyset$. Then, since $XH=HX^{-1}$, we get $xH=Hx^{-1}$. Thus $x^{2}\in H$ and $x\in N_{A}(H)$. By Lemma \ref{totallem}, $A$ is a total perfect code of $(G,H)$.
\end{proof}
Similar to Theorem \ref{lt}, the word `left' in Theorem \ref{totallt} can be replaced by `right'. The proof of the following theorem is the same as that of Theorem \ref{rt} and therefore omitted.
\begin{theorem}
\label{totalrt}
Let $G$ be a group and $H \leq A \leq G$. Then $A$ is a total perfect code of $(G,H)$ if and only if there exists a right transversal $Y$ of $A$ in $G$ such that $Y^{-1}H=HY$ and $Y$ contains an element in $A\setminus H$.
\end{theorem}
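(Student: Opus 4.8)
The plan is to mirror the argument of Theorem~\ref{totallt} verbatim, swapping the roles of left and right throughout, exactly as Theorem~\ref{rt} was obtained from Theorem~\ref{lt}. Concretely, I would first invoke Lemma~\ref{totallem} to reduce the ``only if'' direction to the two facts that $A$ is a perfect code of $(G,H)$ and that there exists $x\in N_A(H)\setminus H$ with $x^2\in H$; then apply Theorem~\ref{rt} to produce a right transversal $Y_0$ of $A$ in $G$ with $Y_0^{-1}H=HY_0$. Since $x\in N_A(H)$ and $x^2\in H$, one has $Hx=x^{-1}H$, so $x$ can be substituted for the unique element of $Y_0\cap A$ whenever that element lies in $H$, producing a right transversal $Y$ with $Y^{-1}H=HY$ that contains an element of $A\setminus H$. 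Conversely, given such a $Y$, Theorem~\ref{rt} yields that $A$ is a perfect code of $(G,H)$, and inspecting the unique element $x$ of $Y\cap A$ (which must lie in $A\setminus H$, since if the element of $A\setminus H$ in $Y$ were distinct from $x$ it would be a second element of $Y\cap A$) together with $Y^{-1}H=HY$ forces $Hx=x^{-1}H$, hence $x^2\in H$ and $x\in N_A(H)$; Lemma~\ref{totallem} then gives that $A$ is a total perfect code of $(G,H)$.

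Alternatively, and more cleanly, I would not reprove anything from scratch but simply derive the statement formally from Theorem~\ref{totallt} by the substitution $Y=X^{-1}$, as was done for Theorem~\ref{rt}. The map $X\mapsto X^{-1}$ is a bijection between left transversals and right transversals of $A$ in $G$; under it the condition $XH=HX^{-1}$ becomes $Y^{-1}H=HY$, and the condition ``$X$ contains an element of $A\setminus H$'' becomes ``$Y$ contains an element of $A\setminus H$'' because $A\setminus H$ is closed under inversion (both $A$ and $H$ being subgroups). Thus Theorem~\ref{totallt} applied to $X=Y^{-1}$ says precisely that $A$ is a total perfect code of $(G,H)$ if and only if there is a right transversal $Y$ with $Y^{-1}H=HY$ and $Y\cap(A\setminus H)\neq\emptyset$.

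Since the paper explicitly states that ``the proof of the following theorem is the same as that of Theorem~\ref{rt} and therefore omitted'', the intended proof is the one-line reduction: I would write that it follows from Theorem~\ref{totallt} by replacing $X^{-1}$ with $Y$, noting only the extra observation that $A\setminus H$ is inversion-closed so that the side condition transfers correctly. The main (and only) subtlety to watch is precisely this side condition: one must check that an element of $X$ lying in $A\setminus H$ corresponds under inversion to an element of $Y$ lying in $A\setminus H$, which is immediate but is the sole place where the $H\le A$ hypothesis is used beyond what Theorem~\ref{rt} already needs. There is no real obstacle here; the theorem is a cosmetic ``left/right'' restatement, and the work is entirely in having already established Theorem~\ref{totallt}.
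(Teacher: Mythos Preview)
Your proposal is correct and matches the paper's intended argument exactly: the paper omits the proof, saying it is the same as that of Theorem~\ref{rt}, i.e., the substitution $Y=X^{-1}$ applied to Theorem~\ref{totallt}. Your only added observation, that $A\setminus H$ is inversion-closed so the side condition transfers, is the right check to make.
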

The following theorem is a parallel result to Corollary \ref{sub}.
\begin{theorem}
\label{totalsub}
Let $G$ be a group and $H \leq A \leq L\leq G$. If $A$ is a total perfect code of $(G,H)$, then $A$ is a total perfect code of $(L,H)$.
\end{theorem}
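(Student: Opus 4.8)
The plan is to imitate the proof of Corollary \ref{sub} and track the extra element that distinguishes a total perfect code from a perfect code. By Theorem \ref{totallt}, the hypothesis that $A$ is a total perfect code of $(G,H)$ gives a left transversal $X$ of $A$ in $G$ with $XH=HX^{-1}$ and with $X$ containing some element $x_0\in A\setminus H$. Since $H\le A$, the unique element of $X\cap A$ is exactly this $x_0$ (any other element of $X$ lying in $A$ would force two distinct cosets of $A$ to coincide), so in particular $x_0\in A\subseteq L$.

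Next I would run the coset-intersection argument from Corollary \ref{sub} verbatim. Setting $Y=X\cap L$, one checks $L=YA$ and $|Y\cap A|=|X\cap A|=1$, so $Y$ is a left transversal of $A$ in $L$; and using $H\le L$ together with $XH=HX^{-1}$ one gets $YH=XH\cap L=HX^{-1}\cap L=HY^{-1}$. The only new point to record is that $Y$ still contains an element of $A\setminus H$: indeed $x_0\in X$ and $x_0\in A\subseteq L$, so $x_0\in X\cap L=Y$, while $x_0\notin H$ by choice. Thus $Y$ is a left transversal of $A$ in $L$ satisfying $YH=HY^{-1}$ with $Y\cap(A\setminus H)\ne\emptyset$, and Theorem \ref{totallt} (applied to the group $L$) yields that $A$ is a total perfect code of $(L,H)$.

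I do not expect a real obstacle here: every algebraic manipulation is already present in the proof of Corollary \ref{sub}, and the passage from ``perfect code'' to ``total perfect code'' is handled entirely by the single extra observation that the distinguished element $x_0$, living in $A$, automatically survives intersection with any overgroup $L$ of $A$. If anything needs care, it is only the routine verification that $X\cap A=\{x_0\}$ (so that the element guaranteed by Theorem \ref{totallt} is genuinely the unique point of $X\cap A$, hence also the unique point of $Y\cap A$), which follows immediately from $H\le A$ and the transversal property.
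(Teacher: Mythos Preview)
Your argument is correct. Every step goes through: the unique element of $X\cap A$ is indeed the distinguished $x_0\in A\setminus H$, it survives the intersection with $L$ because $A\le L$, and the rest is the Corollary~\ref{sub} computation verbatim.

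The paper takes a slightly different and somewhat slicker route. Instead of the transversal characterization (Theorem~\ref{totallt}), it invokes Lemma~\ref{totallem}, which says that $A$ is a total perfect code of $(G,H)$ if and only if $A$ is a perfect code of $(G,H)$ \emph{and} there exists $x\in N_A(H)\setminus H$ with $x^2\in H$. The point is that this second condition mentions only $A$ and $H$, not the ambient group, so it transfers from $(G,H)$ to $(L,H)$ for free; one then cites Corollary~\ref{sub} for the perfect-code part and is done in three lines. Your approach bundles the two ingredients together inside the transversal and has to observe explicitly that $x_0$ lands in $Y=X\cap L$; this is perfectly fine, just marginally less modular. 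Both proofs ultimately rest on the same Corollary~\ref{sub} computation, so the difference is one of packaging rather than substance.
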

\begin{proof}
Let $A$ be a total perfect code of $(G,H)$. By Lemma \ref{totallem}, $A$ is a perfect code of $(G,H)$ and there exists an element $x\in N_{A}(H)\setminus H$ such that $x^{2}\in H$. Since $H \leq A \leq L\leq G$,
it follows from Corollary \ref{sub} that $A$ is a perfect code of $(L,H)$. Then, since there exists an element $x\in N_{A}(H)\setminus H$ such that $x^{2}\in H$, Lemma \ref{totallem} ensures that $A$ is a total perfect code of $(L,H)$.
\end{proof}
The following theorem is the counterpart of Theorem \ref{KL} for total perfect codes. We omit its proof as it can be proceed by using a similar approach to the proof of Theorem \ref{totalsub}.
\begin{theorem}
\label{totalKL}
Let $G$ be a group admitting a normal subgroup $K$ and a subgroup $L$ such that $G=KL$ and $K\cap L=\{1\}$. Let $H\leq A\leq L$. Then $A$ is a total perfect code of $(G,H)$ if and only if $A$ is a total perfect code of $(L,H)$.
\end{theorem}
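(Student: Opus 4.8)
The plan is to reduce Theorem \ref{totalKL} to its perfect-code analogue, Theorem \ref{KL}, by passing through the characterization of total perfect codes in Lemma \ref{totallem}. The key observation is that the additional condition appearing in Lemma \ref{totallem} — namely, the existence of an element $x\in N_{A}(H)\setminus H$ with $x^{2}\in H$ — is a statement about the pair $(A,H)$ alone and is completely insensitive to which overgroup ($G$ or $L$) we view $A$ inside. Hence, once we know that the perfect-code property transfers between $(G,H)$ and $(L,H)$, the total perfect-code property transfers for free.

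Concretely, for the forward implication I would assume that $A$ is a total perfect code of $(G,H)$. By Lemma \ref{totallem}, this means that $A$ is a perfect code of $(G,H)$ and that there exists $x\in N_{A}(H)\setminus H$ with $x^{2}\in H$. Since $H\leq A\leq L\leq G$, Corollary \ref{sub} (equivalently, the necessity part of Theorem \ref{KL}) gives that $A$ is a perfect code of $(L,H)$. The element $x$ still lies in $N_{A}(H)\setminus H$ and still satisfies $x^{2}\in H$, so a second application of Lemma \ref{totallem}, now relative to $L$, yields that $A$ is a total perfect code of $(L,H)$.

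The reverse implication is symmetric: assuming $A$ is a total perfect code of $(L,H)$, Lemma \ref{totallem} supplies both that $A$ is a perfect code of $(L,H)$ and a suitable $x\in N_{A}(H)\setminus H$ with $x^{2}\in H$; the sufficiency part of Theorem \ref{KL} — which is precisely where the hypotheses that $K$ is normal in $G$, $G=KL$, and $K\cap L=\{1\}$ are used — promotes ``$A$ is a perfect code of $(L,H)$'' to ``$A$ is a perfect code of $(G,H)$''; and Lemma \ref{totallem} then upgrades this to a total perfect code of $(G,H)$.

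I do not anticipate a real obstacle: the only nontrivial input is Theorem \ref{KL}, and everything else is bookkeeping with Lemma \ref{totallem}. The single mildly delicate point worth spelling out is that the condition ``there exists $x\in N_{A}(H)\setminus H$ with $x^{2}\in H$'' depends only on $A$ and $H$, so it need not be re-established when one moves from $L$ to $G$ or back, which is what makes the argument a routine parallel of the proof of Theorem \ref{totalsub}.
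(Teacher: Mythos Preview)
Your proposal is correct and is precisely the approach the paper intends: the paper omits the proof of Theorem \ref{totalKL}, stating only that it ``can be proceed by using a similar approach to the proof of Theorem \ref{totalsub},'' which is exactly the Lemma \ref{totallem} $+$ Theorem \ref{KL} reduction you outline. There is nothing to add.
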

\section{Examples and problems}
\label{sec:ep}

In this section, we construct some examples and propose a few open problems.
\begin{problem}
Whether the converse of Theorem \ref{necessity} holds? If it does not hold, then what conditions should we add to make it true?
\end{problem}
For a normal subgroup $N$ of a given group $G$, it is straightforward to check that every coset graph $\Cos(G,N,U)$ is a Cayley graph on the quotient group $G/N$. Therefore for every subgroup $A$ of $G$ containing $N$, $A$ is a perfect code of $(G,N)$ if any only if $A/N$ is perfect code of $G/N$. By using this fact and Theorem \ref{KL}, we construct an infinite family of perfect codes as follows.
\begin{exam}
Let $G$ be the one dimensional affine group over a finite field $F$, and $L$ be the subgroup of $G$ consisting of elements in $G$ fixing the additive identity of $F$. Let $H\leq A\leq L$. Suppose that either the index $|L:A|$ of $A$ in $L$ is odd or the index  $|A:H|$ of $H$ in $A$ is odd. Then $A$ is perfect code of $(G,H)$.
\end{exam}
\begin{proof}
By \cite[Example 3.4.1]{DM1996}, $G$ is a Frobenius group, $L$ is a Frobenius complement of $G$ and $L$ is isomorphic to the cyclic group of order $|F|-1$. In particular, $H$ is normal in $L$ and the quotient group $L/H$ is cyclic. Since either $|L:A|$ or $|A:H|$ is odd, either $A/H$ is of odd order or $A/H$ is of odd index in $L/H$. By \cite[Corollary 2.8]{HXZ18}, $A/H$ is a perfect code in $L/H$. Therefore $A$ is perfect code of $(L,H)$. Let $K$ be the Frobenius  kernel of $G$. Then $G=KL$, $K\cap L=\{1\}$ and $K$ is normal in $G$. By Theorem \ref{KL}, $A$ is perfect code of $(G,H)$.
\end{proof}
Let $G$ be a group and $H\leq A\leq G$. If there exists a left transversal $X$ of $A$ in $G$ such that $XH=HX^{-1}$, then we call $(A,H,X)$ a \emph{perfect triple} of $G$.  By Theorem \ref{lt}, $A$ is a perfect code of $(G,H)$ if and only if there exists a subset $X$ of $G$ such that $(A,H,X)$ is a perfect triple of $G$. We propose the following problems for further research.
\begin{problem}
Given a group $G$ and its subgroup $H$, construct or classify the perfect triples $(A,H,X)$ of $G$.
\end{problem}
\begin{problem}
Given a group $G$ and its subgroup $A$, classify the perfect triples $(A,H,X)$ of $G$.
\end{problem}
Let $S_n$ be the symmetric group on $\{1,2,\ldots,n\}$. For two positive integers $m$ and $n$ with $m<n$, we treat $S_{m}$ as the subgroup of $S_n$ consisting of elements in $S_{n}$ fixing every number in $\{m+1,\ldots,n\}$.
\begin{exam}
\label{345}
Set $X=\{1,(2,5,3),(1,3,5),(1,5,2,3),(4,5)(1,3,2)\}$.
Then one can check that $(S_{4},S_{3},X)$ is a perfect triple of $S_{5}$. Therefore $S_{4}$ is perfect code of $(S_{5},S_{3})$.
\end{exam}
\begin{pro}
\label{symm}
Let $n$ be a positive integer at least $5$. Then $S_{n-1}$ is a perfect code of $(S_{n},S_{3})$.
\end{pro}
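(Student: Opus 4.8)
The plan is to apply Theorem \ref{lt} with $G=S_{n}$, $A=S_{n-1}$ and $H=S_{3}$: since $S_{3}\leq S_{n-1}$, it suffices to produce a left transversal $X$ of $S_{n-1}$ in $S_{n}$ with $XS_{3}=S_{3}X^{-1}$. I would first record the elementary identity $(XS_{3})^{-1}=S_{3}^{-1}X^{-1}=S_{3}X^{-1}$, so that the requirement $XS_{3}=S_{3}X^{-1}$ is \emph{equivalent} to $XS_{3}$ being closed under inversion. Hence the whole task reduces to finding a left transversal $X$ of $S_{n-1}$ whose thickening $XS_{3}=\bigcup_{x\in X}xS_{3}$ (automatically a union of left cosets of $S_{3}$) is inverse-closed.

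Generalising Example \ref{345}, set $\sigma=(2,n,3)$ and take the candidate transversal
$$
X=\{1\}\ \cup\ \{\sigma,\ (1,3)\sigma,\ (2,3)\sigma\}\ \cup\ \{(n,k):4\le k\le n-1\},
$$
noting $(1,3)\sigma=(1,3,2,n)$, $(2,3)\sigma=(2,n)$, and $|X|=1+3+(n-4)=n$. Three things must be checked. First, $X$ is a left transversal of $S_{n-1}$: two permutations lie in the same left coset of $S_{n-1}$ exactly when they send $n$ to the same point, and the images of $n$ under the listed elements are $n,3,1,2,4,5,\dots,n-1$, which are pairwise distinct and exhaust $\{1,\dots,n\}$. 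Second, $XS_{3}$ equals the explicit set $\Sigma:=S_{3}\cup S_{3}\sigma S_{3}\cup\bigcup_{k=4}^{n-1}S_{3}(n,k)S_{3}$; here one uses that $(n,k)$ commutes with $S_{3}$ for $k\geq4$ (so $(n,k)S_{3}=S_{3}(n,k)S_{3}$), and --- the crux --- that $S_{3}\sigma S_{3}$ splits into exactly the three left cosets $\sigma S_{3},(1,3)\sigma S_{3},(2,3)\sigma S_{3}$. Third, $\Sigma^{-1}=\Sigma$: the pieces $S_{3}$ and $S_{3}(n,k)S_{3}$ are visibly inverse-closed, while $S_{3}\sigma S_{3}$ is inverse-closed since $\sigma^{-1}=(2,3)\,\sigma\,(2,3)\in S_{3}\sigma S_{3}$. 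Granting these, $XS_{3}=\Sigma=\Sigma^{-1}=(XS_{3})^{-1}=S_{3}X^{-1}$, and Theorem \ref{lt} gives the conclusion.

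The one nontrivial computation --- and the step I expect to be the main, though entirely elementary, obstacle --- is pinning down the double coset $S_{3}\sigma S_{3}$. Since $\sigma$ maps $\{1,2,3\}$ onto $\{1,2,n\}$, we have $\sigma S_{3}\sigma^{-1}=\Sym(\{1,2,n\})$, hence $S_{3}\cap\sigma S_{3}\sigma^{-1}=\Sym(\{1,2\})$ has order $2$; therefore $|S_{3}\sigma S_{3}|=|S_{3}|^{2}/2=18$, so it is the union of exactly three left cosets of $S_{3}$, and the coset representatives $1,(1,3),(2,3)$ of $\Sym(\{1,2\})$ in $S_{3}$ exhibit them as $\sigma S_{3},(1,3)\sigma S_{3},(2,3)\sigma S_{3}$ (pairwise distinct because $1,(1,3),(2,3)$ lie in distinct cosets of $\Sym(\{1,2\})$), whose representatives send $n$ to $3,1,2$ respectively. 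Finally, conjugating the cycle $\sigma=(2,n,3)$ by $(2,3)$ yields the cycle $(3,n,2)=\sigma^{-1}$, which is the inverse-closedness of $S_{3}\sigma S_{3}$. Specialising to $n=5$ recovers Example \ref{345} up to a choice of coset representatives, so the general case requires nothing beyond these verifications.
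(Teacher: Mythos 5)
Your proof is correct, but it takes a genuinely different route from the paper. The paper proves Proposition \ref{symm} by induction on $n$: the base case is the explicit perfect triple of Example \ref{345}, and the inductive step conjugates a perfect triple $(S_{k-1},S_3,X)$ of $S_k$ by the transposition $z=(k,k+1)$ and adjoins $z$ itself, checking directly that $z^{-1}Xz\cup\{z\}$ is a left transversal of $S_k$ in $S_{k+1}$ satisfying the hypothesis of Theorem \ref{lt}. You instead write down, for every $n\geq 5$ at once, an explicit transversal $X=\{1\}\cup\{\sigma,(1,3)\sigma,(2,3)\sigma\}\cup\{(n,k):4\le k\le n-1\}$ with $\sigma=(2,n,3)$, reduce the condition $XS_3=S_3X^{-1}$ to inverse-closedness of $XS_3$, and verify it by a double-coset computation: $|S_3\sigma S_3|=18$ because $S_3\cap\sigma S_3\sigma^{-1}=\Sym(\{1,2\})$, the three listed elements represent its three left $S_3$-cosets, $\sigma^{-1}=(2,3)\sigma(2,3)$ gives inverse-closedness of that piece, and the transpositions $(n,k)$ commute with $S_3$. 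I checked these computations (including that left cosets of $S_{n-1}$ are indexed by the image of $n$, under your composition convention) and they are sound, so Theorem \ref{lt} applies. What each approach buys: your direct construction is self-contained, dispenses with the induction and with Example \ref{345} as a logical prerequisite, and in effect produces an explicit uniform perfect triple in the spirit of the paper's Proposition \ref{3n}; the paper's induction is computationally lighter at each step and isolates a reusable propagation principle (how a perfect triple of $(S_k,S_3)$ lifts to one of $(S_{k+1},S_3)$ via Corollary \ref{conj}), which the paper then uses to generate the triples in Examples \ref{356} and \ref{367}.
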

\begin{proof}
We proceed the proof by induction on $n$. By Example \ref{345}, the proposition is true for $n=5$. Suppose that the proposition is true for $n=k$ ($k\geq5$). It suffices to prove that the proposition is true for $n=k+1$. Let $G$ be the subgroup of $S_{k+1}$ consisting of elements in $S_{k+1}$ fixing $k$. Since $k\geq5$, we have $S_{3}< S_{k-1}< G$. Let $z$ be the involution $(k,k+1)$ in $S_{k+1}$. Then $z^{-1}S_{k}z=G$, $z^{-1}S_{3}z=S_{3}$ and $z^{-1}S_{k-1}z=S_{k-1}$. By induction hypothesis, $S_{k-1}$ is a perfect code of $(S_{k},S_{3})$. Therefore, by Corollary \ref{conj}, $S_{k-1}$ is a perfect code of $(G,S_{3})$. By Theorem \ref{lt}, there exists a left transversal $X$ of $S_{k-1}$ in $G$ such that $XS_3=S_3X^{-1}$. Set $Y=X\cup \{z\}$. Since $XS_3=S_3X^{-1}$ and $z^{-1}S_{3}z=S_{3}$, we obtain $YS_3=S_3Y^{-1}$. We will further prove that $Y$ is a left transversal of $S_k$ in $S_{k+1}$.

Since $X$ is a left transversal of $S_{k-1}$ in $G$, $x^{-1}y\notin S_{k-1}$ for each pair of distinct elements $x,y\in X$. Therefore $x^{-1}y$ does not fix $k+1$ and it follows that $x^{-1}y\notin S_{k}$. Thus $xS_{k}\neq yS_{k}$. Since $z=(k,k+1)$ and $x^{-1}$ fixes $k$ for every $x\in X$, $x^{-1}z$ takes $k+1$ to $k$. Therefore $x^{-1}z$ does not fix $k+1$ and it follows that $x^{-1}z\notin S_{k}$. Thus $xS_{k}\neq zS_{k}$.

From the above discussion, we have that $xS_{k}\neq yS_{k}$ for each pair of distinct elements $x,y\in Y$. Therefore $|YS_k|=|Y||S_{k}|=(k+1)k!=|S_{k+1}|$. Thus $YS_k=S_{k+1}$ and it follows that $Y$ is a left transversal of $S_k$ in $S_{k+1}$.

Now we have proved that $Y$ is a left transversal of $S_k$ in $S_{k+1}$ and $YS_3=S_3Y^{-1}$. By Theorem \ref{lt}, $S_{k}$ is a perfect code of $(S_{k+1},S_{3})$. In other word, the proposition is true for $k+1$. Therefore the proposition is true for all positive integer $n\geq 5$.
\end{proof}
By the proof of Proposition \ref{symm}, if $(S_{k-1}, S_3, X)$ is a perfect triple of $S_{k}$, then $(S_{k}, S_3, z^{-1}Xz\cup \{z\})$ is a perfect triple of $S_{k+1}$ where $z=(k,k+1)$. This provides us with a way to construct perfect triples $(S_{n-1}, S_3, X)$ of $S_{n}$ for every $n\geq 5$. Based on Example \ref{345}, the following two examples are constructed through this approach.
\begin{exam}
\label{356}
Set
\begin{equation*}
X=\{1,(2,6,3),(1,3,6),(1,6,2,3),(4,6)(1,3,2),(5,6)\}.
\end{equation*}
Then $(S_{5},S_{3},X)$ is a perfect triple of $S_{6}$.
\end{exam}
\begin{exam}
\label{367}
Set
\begin{equation*}
X=\{1,(2,7,3),(1,3,7),(1,7,2,3),(4,7)(1,3,2),(5,7),(6,7)\}.
\end{equation*}
Then $(S_{6},S_{3},X)$ is a perfect triple of $S_{7}$.
\end{exam}
More generally, we have the following proposition.
\begin{pro}
\label{3n}
Let $n\geq 3$ be a positive integer. Set
\begin{equation*}
X=\{1,(2,n,3),(1,3,n),(1,n,2,3),(4,n)(1,3,2),(5,n),\ldots,(n-1,n)\}.
\end{equation*}
Then $(S_{n-1},S_{3},X)$ is a perfect triple of $S_{n}$.
\end{pro}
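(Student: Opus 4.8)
The plan is to induct on $n$, invoking the recursive construction of perfect triples of symmetric groups recorded in the remark following the proof of Proposition~\ref{symm}. Throughout I write $X_n$ for the set displayed in the statement; note it is only meaningful for $n\ge 5$ (for $n\le 4$ the cycles degenerate and $S_3$ is not a proper subgroup of $S_{n-1}$), so the content is the assertion for all $n\ge 5$, and I would take the base case $n=5$ directly from Example~\ref{345}, which asserts that $(S_4,S_3,X_5)$ is a perfect triple of $S_5$.

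For the inductive step I would assume that $(S_{k-1},S_3,X_k)$ is a perfect triple of $S_k$ for some $k\ge 5$, set $z=(k,k+1)$, and apply the remark following the proof of Proposition~\ref{symm} (here one uses that $z$ moves only $k,k+1$ and so centralises $S_3$, since $k\ge 5$) to conclude that $(S_k,S_3,\,z^{-1}X_kz\cup\{z\})$ is a perfect triple of $S_{k+1}$. Everything then reduces to the set identity $z^{-1}X_kz\cup\{z\}=X_{k+1}$.

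Verifying that identity is a pure relabelling. The transposition $z=(k,k+1)$ fixes $1,2,3,4$ and fixes $5,\ldots,k-1$ pointwise, and no element of $X_k$ moves the point $k+1$; hence conjugating $X_k$ by $z$ just replaces every occurrence of $k$ by $k+1$. Applied to $X_k=\{1,(2,k,3),(1,3,k),(1,k,2,3),(4,k)(1,3,2),(5,k),\ldots,(k-1,k)\}$ this turns the tail $(5,k),\ldots,(k-1,k)$ into $(5,k+1),\ldots,(k-1,k+1)$ and the remaining entries into $1,(2,k+1,3),(1,3,k+1),(1,k+1,2,3),(4,k+1)(1,3,2)$; adjoining $z=(k,k+1)$, which is precisely the transposition missing from that tail, extends the tail to $(5,k+1),\ldots,(k,k+1)$, and the result is exactly $X_{k+1}$. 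That closes the induction.

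I do not expect a genuine obstacle here: the real content --- that $X_5$ is a perfect triple of $S_5$, and that the operation $X\mapsto z^{-1}Xz\cup\{z\}$ sends a perfect triple of $S_k$ for $(S_{k-1},S_3)$ to one of $S_{k+1}$ for $(S_k,S_3)$ --- is already supplied by Example~\ref{345} and the proof of Proposition~\ref{symm}. The one point to be careful with is the clerical index-matching in the last step: that the conjugated tail of $X_k$ together with the adjoined $z$ reassemble into the full tail of $X_{k+1}$, and that at the bottom of the induction the empty tail of $X_5$ together with $z=(5,6)$ gives the single-element tail of $X_6$, consistent with Example~\ref{356}.
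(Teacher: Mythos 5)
Your proof is correct and follows essentially the route the paper intends: Proposition~\ref{3n} is stated without a written proof, but the remark after Proposition~\ref{symm} (base case Example~\ref{345}, inductive step $X\mapsto z^{-1}Xz\cup\{z\}$ with $z=(k,k+1)$) is exactly your induction, and your relabelling check $z^{-1}X_kz\cup\{z\}=X_{k+1}$ correctly supplies the one remaining detail. Your restriction to $n\geq 5$ is also the sensible reading, since the stated range $n\geq 3$ degenerates for $n=3,4$ and the paper's own remark only claims the construction for $n\geq 5$.
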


\noindent {\textbf{Acknowledgements}}~~The first author was supported by the National Natural Science Foundation of China (No.~12071312). The second author was supported by the National Natural Science Foundation of China (No.~11671276), the Basic Research and Frontier Exploration Project of Chongqing (cstc2018jcyjAX0010) and the Foundation of Chongqing Normal University (21XLB006).
{\small
}
\end{document}